\theoremstyle{plain}
\newtheorem{theorem}{Theorem}[section]
\newtheorem{lemma}{Lemma}[section]
\newtheorem{corollary}{Corollary}[section]
\newtheorem{definition}{Definition}[section]
\newtheorem{example}{Example}[section]
\theoremstyle{remark}
\newtheorem{remark}{Remark}[section]
\numberwithin{equation}{section}
\def\<{\left < }
\def\>{\right >}
\def\({\left ( }
\def\){\right )}
\begin{document}



\title[Warped Product Pointwise Semi-slant Submanifolds]{Warped Product Pointwise Semi-slant Submanifolds of Almost Contact Manifolds}

\author[I. Mihai]{Ion Mihai$^1$}
\author[S. Uddin]{Siraj Uddin$^2$}
\address{$^1$Faculty of Mathematics, University of Bucharest, Str. Academiei 14, 010014 Bucharest, Romania}
\email{imihai@fmi.unibuc.ro}
\address{$^2$Department of Mathematics, Faculty of Science, King Abdulaziz University, 21589 Jeddah, Saudi Arabia}
\email{siraj.ch@gmail.com}
\author[A. Mihai]{Adela Mihai$^3$}
\address{$^3$Department of Mathematics and Computer Science,Technical University of Civil Engineering Bucharest, Lacul Tei Bvd. 122-124,  020396 Bucharest, Romania}
\email{adela.mihai@utcb.ro}

\keywords{Warped products, pointwise slant submanifolds, pointwise semi-slant submanifolds, Sasakian manifolds, cosymplectic.\\
\indent 2010 {\it Mathematics Subject Classification}. Primary: 53C15, 53C40, 53C42.  Secondary: 53B25.\\
\indent {\it Received}: March 13, 2019.\\
\indent {\it Accepted}:} 

\begin{abstract} 
Recently, B.-Y. Chen and O. J. Garay studied pointwise slant submanifolds of almost Hermitian manifolds. By using the notion of pointwise slant submanifolds, we investigate the geometry of pointwise semi-slant submanifolds and their warped products in Sasakian and cosymplectic manifolds. We prove that there  exist no proper pointwise semi-slant warped product submanifold other than contact CR-warped products in Sasakian manifolds. We give non-trivial examples of such submanifolds in cosypmlectic manifolds and obtain several fundamental results, including a characterization for warped product pointwise semi-slant submanifolds.
\end{abstract}

\maketitle

\sloppy
\section{Introduction}
\label{intro}
In \cite{C1}, B.-Y. Chen introduced the notion of slant submanifolds of almost Hermitian manifolds as a natural generalization of holomorphic (invariant) and totally real (anti-invariant) submanifolds. Afterwards, the geometry of slant submanifolds became an active topic of research in differential geometry. Later, A. Lotta \cite{Lotta} has extended this study for almost contact metric manifolds. J. L. Cabrerizo et al. investigated slant submanifolds of a Sasakian manifold \cite{Cab2}. N. Papaghiuc introduced in \cite{Pa}  a class of submanifolds, called semi-slant submanifolds of almost Hermitian manifolds, which are the generalizations of slant and CR-submanifolds. Later on,  Cabrerizo et al. \cite{Cab} extended this idea for semi-slant submanifolds of contact metric manifolds and provided many examples of such submanifolds.

Next, as an extension of slant submanifolds of an almost Hermitian manifold, F. Etayo \cite{Etayo} introduced the notion of pointwise slant submanifolds of almost Hermitian manifolds. Recently, B.-Y. Chen and O. J. Garay \cite{C6} studied pointwise slant submanifolds of almost Hermitian manifolds. They have obtained several fundamental results, in particular, a characterization of these submanifolds. K. S. Park \cite{Park} has extended this study for almost contact metric manifolds. In his definition of pointwise slant submanifolds of almost contact metric manifolds he did not mention whether the structure vector field $\xi$ is either tangent or normal to the submanifold. Recently, B. Sahin studied pointwise semi-slant submanifolds and warped product pointwise semi-slant submanifolds by using the notion of pointwise slant submanifolds \cite{Sahin3}. In \cite{U5}, we modified the definition of pointwise slant submanifolds of an almost contact metric manifold such that the structure vector field $\xi$ is tangent to the submanifold. We have obtained a simple characterization for such submanifolds and studied warped product pointwise pseudo-slant submanifolds of Sasakian manifolds.

In  1969, R. L. Bishop and B. O'Neill  \cite{Bi} introduced and studied warped product manifolds. 30 years later, around the beginning of this century,  B.-Y. Chen initiated in \cite{C3,C4} the study of warped product CR-submanifolds of Kaehler manifolds. Chen's work in this line of research motivated many geometers to study the geometry of warped product submanifolds by using his idea for different structures on manifolds (see, for instance, \cite{Al1}, \cite{Has}, \cite{Mun} and \cite{U1}). For a detailed survey on warped product submanifolds we refer to Chen's books \cite{book,book17} and his survey article \cite{C5} as well.

In \cite{Sahin1}, B. Sahin showed that there exists no proper warped product semi-slant submanifold of Kaehler manifolds. Then, he introduced the notion of warped product hemi-slant submanifolds of Kaehler manifolds \cite{Sahin2}. Recently, he defined and studied warped product pointwise semi-slant submanifolds and showed that there exists a non-trivial warped product pointwise semi-slant submanifold of the form $M_T\times_fM_\theta$ in a Kaehler manifold $\tilde M$, where $M_T$ and $M_\theta$ are invariant and proper pointwise slant submanifolds of $\tilde M$, respectively \cite{Sahin3}. For almost contact metric manifolds, we have seen in \cite{Khan} and \cite{Al} that there are no proper warped product semi-slant submanifolds in cosymplectic and Sasakian manifolds. Then, we have considered warped product pseudo-slant submanifolds (warped product  hemi-slant submanifolds \cite{Sahin2}, in the same sense of almost Hermitian manifolds) of cosymplectic \cite{U2} and Sasakian manifolds \cite{U3}.

Recently, K. S. Park \cite{Park} studied warped product pointwise semi-slant submanifolds of almost contact metric manifolds. He proved that there do not exist warped product pointwise semi-slant submanifolds of the form $M_\theta\times_fM_T$ in $\tilde M$, where $\tilde M$ is either a cosymplectic manifold, a Sasakian manifold or a Kenmotsu manifold such that $M_\theta$ and $M_T$ are proper pointwise slant and invariant submanifolds of $\tilde M$, respectively. Then he provided many examples and obtained several results for warped products by reversing these two factors, including sharp estimations for the squared norm of the second fundamental form in terms of the warping functions. Later, we also extended this idea in  \cite{U5} to warped product pointwise pseudo-slant submanifolds of Sasakian manifolds. In this paper, we study warped product pointwise semi-slant submanifolds of the form $M_T\times M_\theta$ of Sasakian and cosymplectic manifolds .  

The present paper is organized as follows: In Section $2$, we give basic definitions and formulas needed for this paper. Section $3$ is devoted to the study of pointwise semi-slant submanifolds of almost contact metric manifolds. In this section, we define pointwise semi-slant submanifolds and in the definition of pointwise semi-slant submanifolds we assume that the structure vector field $\xi$ is always tangent to the submanifold. We give two non-trivial examples of such submanifolds for the justification of our definition and a result which is useful to the next section. In Section $4$, we study warped product pointwise semi-slant submanifolds of Sasakian and cosymplectic manifolds. We prove that there is no proper pointwise semi-slant warped product $M=M_T\times_fM_\theta$ other than contact CR-warped product in Sasakian manifolds, but if we assume the ambient space is cosymplectic then there exists a non-trivial class of such warped products. In this section,  we obtain several new results which are generalizations of warped product semi-slant submanifolds and contact CR-warped product submanifolds. In Section $5$, we provide nontrivial examples of Riemannian product and warped product pointwise semi-slant submanifolds in Euclidean spaces. 
\section{Preliminaries}

An {\it{almost contact structure}} $(\varphi, \xi, \eta)$ on a $(2n+1)$-dimensional manifold $\tilde{M}$ is defined  by a $(1, 1)$ tensor field $\varphi$, a vector field $\xi$, called {\it{characteristic}} or {\it{Reeb vector field}}, and a 1-form $\eta$ satisfying the following conditions
\begin{equation}
\label{2.1}
\varphi^2=-I+\eta\otimes\xi,~~\eta(\xi)=1,~~\eta\circ\xi=0,~~\eta(\xi)=1,
\end{equation}
where $I:T\tilde M\to T\tilde M$ is the identity map \cite{Bl}. There always exists a Riemannian metric $g$ on an almost contact manifold
$\tilde M$ satisfying the following compatibility condition
\begin{equation}
\label{2.2}
g(\varphi X, \varphi Y)=g(X, Y)-\eta(X)\eta(Y),
\end{equation}
for any $X, Y\in\Gamma(T\tilde M)$, the Lie algebra of vector fields on $\tilde{M}$. This metric $g$ is called a {\it{compatible metric}} and the manifold $\tilde M$ together with the structure $(\varphi, \xi, \eta, g)$ is called an {\it{almost contact metric manifold}}. As an immediate consequence of (\ref{2.2}), one has $\eta(X)=g(X,\xi)$ and $g(\varphi X, Y)=-g(X, \varphi Y)$. If $\xi$ is a Killing vector field with respect to $g$, then the contact metric structure is called a $K$-{\it{contact structure}}. An almost contact metric manifold is called {\em{almost cosymplectic}}  if $d\eta=0$ and $d\varphi=0$ according to D. E. Blair in \cite{Bl}. In particular, a normal almost cosymplectic manifold is called {\em{cosymplectic}} and  satisfies
\begin{align}\label{cosy}
\widetilde\nabla\varphi=0,\quad
\widetilde\nabla\xi=0.
\end{align}

 A normal contact metric manifold is said to be a {\it{Sasakian manifold}}. In terms of the covariant derivative of $\varphi$, the Sasakian condition can be expressed by
\begin{equation}
\label{2.3}
(\tilde\nabla_{X}\varphi)Y=g(X, Y)\xi-\eta(Y)X
\end{equation}
for all $X, Y\in\Gamma(T\tilde M)$, where $\tilde\nabla$ is the Levi-Civita connection of $g$. From the formula (\ref{2.3}), it follows that 
\begin{align}
\label{2.4}
\tilde\nabla_{X}\xi=-\varphi X,
\end{align}
for any $X\in\Gamma(T\tilde M)$. 

Let $M$ be a Riemannian manifold isometrically immersed in $\tilde M$ and denote by the same symbol $g$ the Riemannian metric induced on $M$. Let $\Gamma(TM)$ be the Lie algebra of vector fields in $M$ and $\Gamma(T^\perp M)$ the set of all vector fields normal to $M$. Let $\nabla$ be the Levi-Civita connection on $M$, then the Gauss and Weingarten formulas are respectively given by 
\begin{align}
&\tilde\nabla_XY=\nabla_XY+h(X, Y)\label{2.5},\\
&\tilde\nabla_XN=-A_{N}X+\nabla^\perp_XN\label{2.6}
\end{align}
for any $X, Y\in\Gamma(TM)$ and $N\in\Gamma(T^\perp M)$, where $\nabla^\perp$ is the normal connection in the normal bundle $T^\perp M$ and $A_{N}$ is the shape operator of $M$ with respect to the normal vector $N$. Moreover, $h: TM\times TM\to T^\perp M$ is the second fundamental form of $M$ in $\tilde M$. Furthermore, $A_N$ and $h$ are related by \cite{Ya}
\begin{equation}
\label{2.7}
g(h(X, Y), N)=g(A_NX, Y)
\end{equation}
for any $X, Y\in\Gamma(TM)$ and $N\in\Gamma(T^\perp M)$.

For any $X$ tangent to $M$, we write
\begin{equation}
\label{2.8}
\varphi X=PX+FX,
\end{equation}
where $PX$ and $FX$ are the tangential and normal components of $\varphi X$, respectively. Then $P$ is an endomorphism of the tangent bundle $TM$ and $F$ is a normal bundle valued 1-form on $TM$. Similarly, for any vector field $N$ normal to $M$, we put
\begin{equation}
\label{2.9}
\varphi N=tN+fN,
\end{equation}
where $tN$ and $fN$ are the tangential and normal components of $\varphi N$, respectively. 
Moreover, from (\ref{2.2}) and (\ref{2.8}), we have 
\begin{equation}
\label{2.10}
g(PX, Y)=-g(X, PY),
\end{equation}
for any $X, Y\in\Gamma(TM)$.

Throughout this paper, we assume the structure field $\xi$ is tangent to $M$ otherwise $M$ is a C-totally real submanifold \cite{Lotta}. Let $M$ be a Riemannian manifold isometrically immersed in an almost contact metric manifold $(\tilde M, \varphi, \xi, \eta, g)$. A submanifold $M$ of an almost contact metric manifold $\tilde M$ is said to be slant \cite{Cab2}, if for each non-zero vector $X$ tangent to $M$ at $p\in M$ such that $X$ is not proportional to $\xi_p$, the angle $\theta(X)$ between $\varphi X$ and $T_pM$ is constant, i.e., it does not depend on the choice of $p\in M$ and $X\in T_pM-\langle\xi_p\rangle$. 

A slant submanifold is said to be {\it{proper slant}} if neither $\theta=0$ nor $\theta=\frac{\pi}{2}$. We note that on a slant submanifold if $\theta=0,$~then it is an invariant submanifold and if $\theta=\frac{\pi}{2},$ then it is an anti-invariant submanifold. A slant submanifold is said to be {\it proper slant} if it is neither invariant nor anti-invariant.

As a natural extension of slant submanifolds, F. Etayo \cite{Etayo} introduced pointwise slant submanifolds of an almost Hermitian manifold under the name of quasi-slant submanifolds. Later on, B.-Y. Chen and O.J. Garay studied pointwise slant submanifolds of almost Hermitian manifolds and obtained many interesting results \cite{C6}. In a similar way, K.S. Park \cite{Park} defined and studied pointwise slant submanifols of almost contact metric manifolds. His definition of pointwise slant submanifolds of almost contact metric manifold is similar to the pointwise slant submanifolds of almost Hermitian manifolds, therefore we have modified his definition by considering the structure vector field $\xi$ is tangent to the submanifold and studied pointwise slant submanifolds of almost contact metric manifolds in \cite{U5}.

A submanifold $M$ of an almost contact metric manifold $\tilde M$ is said to be {\it{pointwise slant}} if for any nonzero vector $X$ tangent to $M$ at $p\in M$, such that $X$ is not propotional to $\xi_p$, the angle $\theta(X)$ between $\varphi X$ and $T_p^*M=T_pM-\{0\}$ is independent of the choice of nonzero vector $X\in T_p^*M$. In this case, $\theta$ can be regarded as a function on $M$, which is called the {\it{slant function}} of the pointwise slant submanifold.

We note that every slant submanifold is a pointwise slant submanifold, but the converse may not be true. We also note that a pointwise slant submanifold is {\it{invariant}} (respectively, {\it{anti-invariant}}) if for each point $p\in M$, the slant function $\theta=0$ (respectively, $\theta=\frac{\pi}{2}$). A pointwise slant submanifold is slant if and only if the slant function $\theta$ is constant on $M$. Moreover, a pointwise slant submanifold is proper if neither $\theta=0, \frac{\pi}{2}$ nor $\theta$ is constant.

In \cite{U5}, we have obtained the following characterization theorem.

\begin{theorem}\label{T1} {\rm{\cite{U5}}} Let $M$ be a submanifold of an almost contact metric manifold $\tilde M$ such that $\xi\in\Gamma(TM)$. Then, $M$ is pointwise slant if and only if 
\begin{align}
\label{2.11}
P^2=\cos^2\theta\left(-I+\eta\otimes\xi\right),
\end{align}
for some real valued function $\theta$ defined on the tangent bundle $TM$ of $M$.
\end{theorem}

The following relations are immediate consequences of Theorem \ref{T1}.

Let $M$ be a pointwise slant submanifold of an almost contact metric manifold $\tilde M$. Then, we have
\begin{align}
\label{2.12}
g(PX, PY)=\cos^2\theta\,[g(X, Y)-\eta(X)\eta(Y)],
\end{align}
\begin{align}
\label{2.13}
g(FX, FY)=\sin^2\theta\,[g(X, Y)-\eta(X)\eta(Y)],
\end{align}
for any $X, Y\in\Gamma(TM)$.

The next useful relation for a pointwise slant submanifold of an almost contact metric manifold was obtained in \cite{U5}
\begin{equation}
\label{2.14}
tFX=\sin^2\theta\left(-X+\eta(X)\xi\right),~~~~fFX=-FPX,
\end{equation}
for any $X\in\Gamma(TM)$.

\section{Pointwise semi-slant submanifolds}

Recently, B. Sahin \cite{Sahin3} defined and studied pointwise semi-slant submanifolds of Kaehler manifolds. In this section, we define and study pointwise semi-slant submanifolds of almost contact metric manifolds.

\begin{definition}\label{D1} {\rm{A submanifold $M$ of an almost contact metric manifold $\tilde M$ is said to be a pointwise semi-slant submanifold if there exists a pair of orthogonal distributions ${\mathfrak{D}}$ and ${\mathfrak{D}}^\theta$ on $M$ such that 
\begin{enumerate}
\item[(i)] The tangent bundle $TM$ admits the orthogonal direct decomposition $TM={\mathfrak{D}}\oplus{\mathfrak{D}}^\theta\oplus\langle\xi\rangle$.
\item[(ii)] The distribution ${\mathfrak{D}}$ is invariant under $\varphi$, i.e., $\varphi\left({\mathfrak{D}}\right)={\mathfrak{D}}$.
\item[(iii)] The distribution ${\mathfrak{D}}^\theta$ is pointwise slant with slant function $\theta$.
\end{enumerate}
}}
\end{definition}
Note that the normal bundle $T^\perp M$ of a pointwise semi-slant submanifold $M$ is decomposed as 
\begin{align*}
T^\perp M=F{\mathfrak{D}}^\theta\oplus\nu,\,\,\,F{\mathfrak{D}}^\theta\perp\nu,
\end{align*}
where $\nu$ is an invariant normal subbundle of $T^\perp M$ under $\varphi$.

If we denote the dimensions of ${\mathfrak{D}}$ and ${\mathfrak{D}}^\theta$ by $m_1$ and $m_2$, respectively, then:
\begin{enumerate}
\item [(i)] If $m_1=0$, then $M$ is a pointwise slant submanifold.
\item [(ii)] If $m_2=0$, then $M$ is an invariant submanifold.
\item [(iii)] If $m_1=0$ and $\theta=\frac{\pi}{2}$, then $M$ is an anti-invariant submanifold.
\item [(iv)] If $m_1\neq0$ and $\theta=\frac{\pi}{2}$, then $M$ is a contact CR-submanifold.
\item [(v)] If $\theta$ is constant on $M$, then $M$ is a semi-slant submanifold with slant angle $\theta$.

\end{enumerate}
We also note that a pointwise semi-slant submanifold is {\it{proper}} if neither $m_1,m_2=0$ nor $\theta=0, \frac{\pi}{2}$ and $\theta$ should not be a constant.

Now, we provide the following non-trivial example of a pointwise semi-slant submanifolds of an almost contact metric manifold.

\begin{example}\label{E1} {\rm{Let $({\bf{R}}^7, \varphi, \xi, \eta, g)$ be an almost contact metric manifold with cartesian coordinates $(x_1,\, y_1, \,x_2,\,y_2,\,x_3,\, y_3,\, z)$ and the almost contact structure
\begin{align*}
\varphi\left(\frac{\partial}{\partial x_i}\right)=-\frac{\partial}{\partial y_i},\,\,\,\varphi\left(\frac{\partial}{\partial y_j}\right)=\frac{\partial}{\partial x_j},\,\,\varphi\left(\frac{\partial}{\partial z}\right)=0,\,\,\,1\leq i, j\leq 3,
\end{align*}
where $\xi=\frac{\partial}{\partial z},\,\,\eta=dz$ and $g$ is the standard Euclidean metric on ${\bf{R}}^7$. Then $(\varphi, \xi, \eta, g)$ is an almost contact metric structure on  ${\bf{R}}^7$.  Consider a submanifold $M$ of ${\bf{R}}^7$ defined by 
\begin{align*}
\psi(u, v, w, t, z)=(u+v,\,-u+v,\,t\cos w,\,t\sin w,\,w\cos t,\,w\sin t,\,z),
\end{align*}
such that $w, t~(w\neq t)$ are non vanishing real valued functions on $M$. Then the tangent space $TM$ is spanned by the following vector fields
\begin{align*}
&X_1=\frac{\partial}{\partial x_1}-\frac{\partial}{\partial y_1},~~X_2=\frac{\partial}{\partial x_1}+\frac{\partial}{\partial y_1},\\
&X_3=-t\sin w\frac{\partial}{\partial x_2}+t\cos w\frac{\partial}{\partial y_2}+\cos t\frac{\partial}{\partial x_3}+\sin t\frac{\partial}{\partial y_3},\\
&X_4=\cos w\frac{\partial}{\partial x_2}+\sin w\frac{\partial}{\partial y_2}-w\sin t\frac{\partial}{\partial x_3}+w\cos t\frac{\partial}{\partial y_3},\,\,X_5=\frac{\partial}{\partial z}.
\end{align*}
Thus, we observe that ${\mathfrak{D}}={\mbox{Span}}\{X_1,\,X_2\}$ is an invariant distribution and ${\mathfrak{D}}^\theta={\mbox{Span}}\{X_3,\, X_4\}$ is a pointwise slant distribution with pointwise slant function $\theta=\cos^{-1}((t-w)/\sqrt{(t^2+1)(w^2+1)})$. Hence, $M$ is a pointwise semi-slant submanifold of ${\bf{R}}^7$ such that $\xi=\frac{\partial}{\partial z}$ is tangent to $M$.}}
\end{example}

\begin{example}\label{E2} {\rm{Consider a submanifold of ${\bf{R}}^7$ with almost contact structure $\varphi$ given in Example \ref{E1}. If the immersion $\psi:{\bf{R}}^5\to {\bf{R}}^7$ is given by
\begin{align*}
\psi(u_1, u_2, u_3, u_4, t)=(u_1,\,(u_3^2+u_4^2)/2,\,\cos u_4,,\,-u_2,\,(u_3^2-u_4^2)/2,\,\sin u_4,\,t),\,\,\, u_4\neq0;
\end{align*}
then the tangent space $TM$ is spanned by $X_1,\,X_2,\,X_3,\,X_4$ and $X_5$ where
\begin{align*}
&X_1=\frac{\partial}{\partial x_1},\,\,\,X_2=-\frac{\partial}{\partial y_1},\,\,\,X_3=u_3\frac{\partial}{\partial x_2}+u_3\frac{\partial}{\partial y_2},\\
&X_4=u_4\frac{\partial}{\partial x_2}-u_4\frac{\partial}{\partial y_2}-\sin u_4 \frac{\partial}{\partial x_3}+\cos u_4\frac{\partial}{\partial y_3},\,\,X_5=\frac{\partial}{\partial t}.
\end{align*}
Then, $M$ is a pointwise semi-slant submanifold such that ${\mathfrak{D}}={\mbox{Span}}\{X_1,\,X_2\}$ is an invariant distribution and ${\mathfrak{D}}^\theta={\mbox{Span}}\{X_3,\, X_4\}$ is a pointwise slant distribution with pointwise slant function $\theta=\cos^{-1}(\sqrt{2}\,u_4/\sqrt{1+2u_4^2})$.}}
\end{example}

Now, we obtain the following useful results for semi-slant submanifolds of a Sasakian (or cosymplectic) manifold.

\begin{lemma}\label{L1} Let $M$ be a pointwise semi-slant submanifold of a Sasakian (or cosymplectic) manifold $\tilde M$. Then, we have
\begin{enumerate}
\item[(i)] $\sin^2\theta\,g(\nabla_XY, Z)=g(h(X, \varphi Y), FZ)-g(h(X, Y), FPZ),$
\item[(ii)] $\sin^2\theta\,g(\nabla_ZW, X)=g(h(X, Z), FPW)-g(h(\varphi X, Z), FW)$,
\end{enumerate}
for any $X, Y\in\Gamma({\mathfrak{D}}\oplus\langle\xi\rangle)$ and $Z, W\in\Gamma({\mathfrak{D}}^\theta)$.
\end{lemma}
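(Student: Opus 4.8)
The plan is to prove both identities by a direct computation using the Sasakian structure equation \eqref{2.3}, the Gauss--Weingarten formulas \eqref{2.5}--\eqref{2.7}, the decomposition \eqref{2.8}--\eqref{2.9}, and the pointwise-slant relations \eqref{2.12}--\eqref{2.14}, together with the fact that $\mathcal{D}$ is $\varphi$-invariant. For part (i), I would start from $g(\nabla_X Y, Z)$ with $X,Y\in\Gamma(\mathcal{D}\oplus\langle\xi\rangle)$ and $Z\in\Gamma(\mathcal{D}^\theta)$, and bring in $\varphi$ by writing $g(\nabla_X Y, Z) = g(\varphi\nabla_X Y, \varphi Z) + \eta(\nabla_X Y)\eta(Z)$ via \eqref{2.2}; since $Z\in\Gamma(\mathcal{D}^\theta)$ we have $\eta(Z)=0$, so that last term drops. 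Then I rewrite $\varphi\nabla_X Y = \tilde\nabla_X(\varphi Y) - (\tilde\nabla_X\varphi)Y - \varphi\, h(X,Y)$ using $\tilde\nabla_X Y = \nabla_X Y + h(X,Y)$ and the product rule for $\varphi$; the term $(\tilde\nabla_X\varphi)Y = g(X,Y)\xi - \eta(Y)X$ by \eqref{2.3}, and both of its pieces are orthogonal to $\varphi Z \in \mathcal{D}^\theta\oplus F\mathcal{D}^\theta$ (note $\varphi Z = PZ + FZ$ with $PZ\in\mathcal{D}^\theta$), so it contributes nothing. Since $Y\in\Gamma(\mathcal{D}\oplus\langle\xi\rangle)$, $\varphi Y = PY \in \mathcal{D}\oplus\langle\xi\rangle$ is tangent (when $Y=\xi$, $\varphi Y = 0$ and the identity is trivial, so assume $Y\in\Gamma(\mathcal D)$, $\varphi Y=PY$ tangent), hence $\tilde\nabla_X(\varphi Y) = \nabla_X(\varphi Y) + h(X,\varphi Y)$.

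Putting this together, $g(\varphi\nabla_X Y,\varphi Z) = g(\nabla_X(\varphi Y),PZ) + g(h(X,\varphi Y),FZ) - g(\varphi h(X,Y), PZ+FZ)$. For the last term I use \eqref{2.9}: $\varphi h(X,Y) = t\,h(X,Y) + f\,h(X,Y)$, and $g(\varphi h(X,Y), PZ+FZ) = -g(h(X,Y),\varphi(PZ+FZ)) = -g(h(X,Y),\varphi\varphi Z)$; but $\varphi^2 Z = -Z$ (since $\eta(Z)=0$), so this becomes $-g(h(X,Y),-Z)$... which is normal-tangent and hence zero — so I should instead keep $g(\varphi h(X,Y), FZ) = -g(h(X,Y), \varphi FZ) = -g(h(X,Y), tFZ + fFZ)$ and use \eqref{2.14}: $fFZ = -FPZ$ and $tFZ$ is tangent (so it pairs to zero against the normal vector $h(X,Y)$), giving $-g(h(X,Y), -FPZ) = g(h(X,Y),FPZ)$. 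The term $g(\varphi h(X,Y), PZ)$ similarly reduces to a tangential pairing against a normal vector and vanishes. So far: $g(\nabla_X Y, Z) = g(\nabla_X(\varphi Y), PZ) + g(h(X,\varphi Y), FZ) + g(h(X,Y), FPZ)$ — wait, the sign: I'll track it carefully so it comes out $-g(h(X,Y),FPZ)$. Now the remaining tangential term $g(\nabla_X(\varphi Y), PZ)$ is handled by iterating the same trick once more, i.e.\ writing $g(\nabla_X(\varphi Y),PZ)$ and extracting $\varphi$ again; this produces a term with $P^2 Z = \cos^2\theta(-Z)$ via Theorem~1 (using $\eta(Z)=0$), which is exactly what generates the $\cos^2\theta$ and hence, after moving to the other side, the $\sin^2\theta = 1-\cos^2\theta$ coefficient. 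Collecting everything and isolating $\sin^2\theta\,g(\nabla_X Y,Z)$ yields (i). Part (ii) is proved by the symmetric computation, starting instead from $g(\nabla_Z W, X)$ with $Z,W\in\Gamma(\mathcal{D}^\theta)$ and $X\in\Gamma(\mathcal{D}\oplus\langle\xi\rangle)$, using $g(\nabla_Z W, X) = -g(W,\nabla_Z X)$ (metric compatibility, since $g(W,X)=0$) or directly, extracting $\varphi$ and using \eqref{2.4} for the $\xi$-direction and the $\varphi$-invariance of $\mathcal{D}$ to replace $\varphi X$ by the tangent vector $PX$.

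The main obstacle I anticipate is bookkeeping: correctly separating tangential from normal components at each step, keeping track of the numerous sign changes coming from $g(\varphi\,\cdot\,,\,\cdot\,) = -g(\,\cdot\,,\varphi\,\cdot\,)$ and from \eqref{2.3}, and making sure the $\cos^2\theta$ term appears on the correct side so that it combines into $\sin^2\theta$ rather than cancelling. A secondary point is justifying that all the ``cross'' terms vanish — e.g.\ $g(h(X,\varphi Y), tFZ)$-type pairings of a normal vector against a tangent vector, and the $\nu$-component terms — which follows from the orthogonal decomposition $T^\perp M = F\mathcal{D}^\theta \oplus \nu$ with $\nu$ invariant, and from $\varphi$ mapping $\mathcal{D}$ to $\mathcal{D}$. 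Once the vanishing of these terms is established, the identities follow by pure algebra.
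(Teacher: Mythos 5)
Your overall strategy coincides with the paper's (write $g(\nabla_XY,Z)=g(\varphi\tilde\nabla_XY,\varphi Z)$, push $\varphi$ through the derivative using \eqref{2.3}, split $\varphi Z=PZ+FZ$, iterate once more on the $PZ$-part and use $P^2=-\cos^2\theta\,(I-\eta\otimes\xi)$ to create the $\sin^2\theta$), but the bookkeeping in your first step is wrong at exactly the point you yourself flag. The claim that $g(\varphi h(X,Y),PZ)$ ``reduces to a tangential pairing against a normal vector and vanishes'' is false: by \eqref{2.8}, $g(\varphi h(X,Y),PZ)=-g(h(X,Y),\varphi PZ)=-g(h(X,Y),P^2Z+FPZ)=-g(h(X,Y),FPZ)$, which is a pairing of two \emph{normal} vectors and does not vanish in general; it exactly cancels the $+g(h(X,Y),FPZ)$ you extracted from $g(\varphi h(X,Y),FZ)$ via \eqref{2.14}. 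Indeed the whole term is zero at once by \eqref{2.2}: $g(\varphi h(X,Y),\varphi Z)=g(h(X,Y),Z)-\eta(h(X,Y))\eta(Z)=0$ since $h(X,Y)$ is normal and $Z$ tangent. Consequently, after the first step you should have only
\begin{equation*}
g(\nabla_XY,Z)=g(\nabla_X(\varphi Y),PZ)+g(h(X,\varphi Y),FZ),
\end{equation*}
with no $FPZ$-term at all; the term $-g(h(X,Y),FPZ)$ of the lemma does not come from $\varphi h(X,Y)$, as your sketch asserts.

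Where it actually comes from is the second iteration, which you describe only as producing the $\cos^2\theta$ factor: writing $g(\nabla_X(\varphi Y),PZ)=-g(\varphi Y,\tilde\nabla_XPZ)=g(Y,\varphi\tilde\nabla_XPZ)$ and pushing $\varphi$ through again gives $\varphi(PZ)=P^2Z+FPZ$, and the normal piece contributes via the Weingarten formula \eqref{2.6}--\eqref{2.7}: $g(Y,\tilde\nabla_XFPZ)=-g(A_{FPZ}X,Y)=-g(h(X,Y),FPZ)$. In the same step one must differentiate $P^2Z=-\cos^2\theta\,Z$ (using \eqref{2.11} and $\eta(Z)=0$), which produces the extra term $\sin2\theta\,X(\theta)\,g(Y,Z)$; this is killed only by the orthogonality $g(Y,Z)=0$, a point your sketch should state explicitly. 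As written, your accounting is inconsistent: if you keep the spurious $-g(h(X,Y),FPZ)$ from the first step and then perform the second iteration correctly, you end up with $-2\,g(h(X,Y),FPZ)$; if you perform the second iteration as you describe (only the $\cos^2\theta$ term), you get the stated formula but by omitting the Weingarten term that is its true source. Once these two points are repaired, your argument is the paper's proof, and the symmetric computation for (ii) goes through as you indicate.
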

\begin{proof} The first and second parts of the lemma can be proved in a similar way. For any $X, Y\in\Gamma({\mathfrak{D}}\oplus\langle\xi\rangle)$ and $Z\in\Gamma({\mathfrak{D}}^\theta)$ we have
\begin{align*}
g(\nabla_XY, Z)=g(\tilde\nabla_XY, Z)=g(\varphi\tilde\nabla_XY, \varphi Z).
\end{align*}
From the covariant derivative formula of $\varphi$, we derive
\begin{align*}
g(\nabla_XY, Z)=g(\tilde\nabla_X\varphi Y, \varphi Z)-g((\tilde\nabla_X\varphi)Y, \varphi Z).
\end{align*}
Then from (\ref{2.3}), (\ref{2.8}) and the orthogonality of the two distributions, we find
\begin{align*}
g(\nabla_XY, Z)&=g(\tilde\nabla_X\varphi Y, PZ)+g(\tilde\nabla_X\varphi Y, FZ)\\
&=-g(\tilde\nabla_XPZ,\varphi Y)+g(h(X, \varphi Y), FZ)\\
&=g(\varphi \tilde\nabla_XPZ,Y)+g(h(X, \varphi Y), FZ).
\end{align*}
Again, from the covariant derivative formula of $\varphi$, we get
\begin{align*}
g(\nabla_XY, Z)=g(\tilde\nabla_X\varphi PZ,Y)-g((\tilde\nabla_X\varphi)PZ, Y)+g(h(X, \varphi Y), FZ).
\end{align*}
Using (\ref{2.3}), (\ref{2.8}) and the orthogonality of vector fields, we obtain
\begin{align*}
g(\nabla_XY, Z)=g(\tilde\nabla_XP^2Z,Y)+g(\tilde\nabla_XFPZ,Y)+g(h(X, \varphi Y), FZ).
\end{align*}
Then from (\ref{2.11}) and (\ref{2.6}), we have
\begin{align*}
g(\nabla_XY, Z)&=-\cos^2\theta\,g(\tilde\nabla_XZ, Y)+\sin2\theta\,X(\theta)\,g(Y, Z)-g(h(X, Y), FPZ)\\
&+g(h(X, \varphi Y), FZ).
\end{align*}
From the orthogonality of the two distributions the above equation takes the form
\begin{align*}
g(\nabla_XY, Z)=\cos^2\theta\,g(\tilde\nabla_XY, Z)-g(h(X, Y), FPZ)+g(h(X, \varphi Y), FZ).
\end{align*}
Hence, (i) follows from the above relation. In a similar way we can prove (ii).\end{proof}

\section{Warped product pointwise semi-slant submanifolds}
In \cite{Bi}, R.L. Bishop and B. O'Neill introduced the notion of warped product manifolds as follows: Let $M_1$ and $M_2$ be two Riemannian manifolds with Riemannian metrics $g_1$ and $g_2$, respectively, and a positive differentiable function $f$ on $M_1$. Consider the product manifold $M_1\times M_2$ with its projections $\pi_1:M_1\times M_2\rightarrow M_1$ and $\pi_2:M_1\times M_2\rightarrow M_2$. Then their warped product manifold $M= M_1\times _fM_2$ is the Riemannian manifold $M_1\times M_2=(M_1\times M_2, g)$ equipped with the Riemannian metric
\begin{equation*}
g(X, Y)=g_1({\pi_1}_\star X, {\pi_1}_\star Y)+(f\circ\pi_1)^2g_2({\pi_2}_\star X, {\pi_2}_\star Y),
\end{equation*}
for any vector field $X, Y$ tangent to $M$, where $\star$ is the symbol for the tangent maps. A warped product manifold $M=M_1\times _fM_2$ is said to be {\it{trivial}} or simply a {\it{Riemannian product manifold}} if the warping function $f$ is constant. Let $X$ be a vector field tangent to $M_1$ and $Z$ be an another vector field on $M_2$, then from Lemma 7.3 of \cite{Bi}, we have
\begin{equation}
\label{4.1}
\nabla_XZ=\nabla_ZX=X(\ln f)Z,
\end{equation}
where $\nabla$ is the Levi-Civita connection on $M$. If $M=M_1\times_fM_2$ is a warped product manifold then the base manifold $M_1$ is totally geodesic in $M$ and the fiber $M_2$ is totally umbilical in $M$ \cite{Bi}, \cite{C3}. 

By analogy to CR-warped products which are introduced by B.-Y. Chen in \cite{C3}, we define the warped product pointwise semi-slant submanifolds as follows.

\begin{definition}\label{D2} {\rm A warped product of an invariant and a pointwise slant submanifolds, say $M_T$ and $M_\theta$ of an almost contact metric manifold $\tilde M$ is called a {\it warped product pointwise semi-slant submanifold}.}\end{definition}
A warped product pointwise semi-slant submanifold $M=M_T\times_fM_\theta$ is called {\it proper} if $M_\theta$ is a proper pointwise slant submanifold and $M_T$ is an invariant submanifold of $\tilde M$ with a non-constant function $f$ on $M$.

The non-existence of warped product pointwise semi-slant submanifolds of the form $M_\theta\times_fM_T$ of Kaehler and Sasakian manifolds is proved in \cite{Sahin3} and \cite{Park}. On the other hand, there exist a non-trivial warped product pointwise semi-slant submanifolds of the form $M_T\times M_\theta$ of Kaehler manifolds \cite{Sahin3} and contact metric manifolds \cite{Park}.  

In this section, we study the warped product pointwise semi-slant submanifold of the form $M=M_T\times_fM_\theta$. Notice that a warped product pointwise semi-slant submanifold $M=M_T\times_fM_\theta$ is a warped product contact CR-submanifold if the slant function $\theta=\frac{\pi}{2}$. Similarly,  the warped product pointwise semi-slant submanifold $M=M_T\times_fM_\theta$ is a warped product semi-slant submanifold if $\theta$ is constant on $M$, i.e., $M_\theta$ is a proper slant submanifold.

On a warped product pointwise semi-slant submanifold $M=M_T\times_fM_\theta$ of a Sasakian (or cosymplectic) manifold $\tilde M$, if we consider the structure vector field $\xi$ is tangent to $M$, then either $\xi\in\Gamma(TM_T)$ or $\xi\in\Gamma(TM_\theta)$. 

\begin{remark}\label{Rs1}
When $\xi$ is tangent to $M_\theta$, then it is easy to check that warped product is trivial (see \cite{U1} and \cite{Khan}), therefore we always consider $\xi\in\Gamma(TM_T)$. 
\end{remark}

First, we prove the following non-existence result of pointwise semi-slant warped products.
\begin{theorem} There do not exist any proper pointwise semi-slant warped product submanifolds of Sasakian manifolds other than contact CR-warped products.
\end{theorem}
\begin{proof} If $M=M_\theta\times_fM_T$ be a pointwise semi-slant warped product submanifold. Then, in a similar way of Theorem 11 and Theorem 12 of \cite{Park}, we find that $M$ is a Riemannian product of $M_T$ and $M_\theta$.

On the other hand, if $M=M_T\times_fM_\theta$ and $\xi$ is tangent to $M_\theta$, then by Remark \ref{Rs1}, $M$ is again a Riemannian product of $M_T$ and $M_\theta$. Furthermore, if $\xi$ is tangent to $M_T$, then from (\ref{2.3}) and (\ref{2.4}), we have
\begin{align}\label{oc1}
\nabla_Z\xi+h(Z, \xi)=-PZ-FZ
\end{align}
for any $Z\in\Gamma(TM_\theta)$. Then, equating the tangential component of \eqref{oc1} and using \eqref{4.1}, we obtain 
\begin{align}\label{oc2}
\xi(\ln f)Z=-PZ. 
\end{align}
Taking the inner product with $W\in\Gamma(TM_\theta)$, we find 
\begin{align}\label{oc3}
\xi(\ln f)g(Z, W)=-g(PZ, W)
\end{align}
By polarization identity, we get
\begin{align}\label{oc4}
\xi(\ln f)g(Z, W)=g(PZ, W)
\end{align}
From \eqref{oc3} and \eqref{oc4}, we find $\xi(\ln f)=0$. Put this value in \eqref{oc2}, we get $PZ=0$, which means that $\theta=\frac{\pi}{2}$. Hence, $M$ is a contact CR-warped product, which proves the theorem completely.
\end{proof}

What is next? We find that if we replace the ambient manifold Sasakian to cosymplectic, then there exists a non-trivial class of pointwise semi-slant warped products.

\begin{lemma}\label{L2} Let $M=M_T\times_fM_\theta$ be a warped product pointwise semi-slant submanifold of a cosymplectic manifold $\tilde M$ such that $\xi\in\Gamma(TM_T)$, where $M_T$ is an invariant submanifold and $M_\theta$ is a proper pointwise slant submanifold of $\tilde M$. Then, we have
\begin{align}
\label{4.2}
g(h(X, W), FPZ)-g(h(X, PZ), FW)=\sin2\theta\,X(\theta)\,g(Z, W),
\end{align}
for any $X\in\Gamma(TM_T)$ and $Z, W\in\Gamma(TM_\theta)$.
\end{lemma}
\begin{proof} For any $X\in\Gamma(TM_T)$ and $Z, W\in\Gamma(TM_\theta)$, we have
\begin{align}\label{4.3}
g(\tilde\nabla_XZ, W)=X(\ln f)\,g(Z, W).
\end{align}
On the other hand, we can obtain
\begin{align*}
g(\tilde\nabla_XZ, W)=g(\varphi\tilde\nabla_XZ, \varphi W).
\end{align*}
Using the covariant derivative formula of $\varphi$, we get
\begin{align*}
g(\tilde\nabla_XZ, W)=g(\tilde\nabla_X\varphi Z, \varphi W).
\end{align*}
Then, from (\ref{2.5}), (\ref{2.8}), (\ref{4.1}) and the orthogonality of vector fields, we find
\begin{align*}
g(\tilde\nabla_XZ, W)&=g(\tilde\nabla_XPZ, PW)+g(\tilde\nabla_XPZ, FW)+g(\tilde\nabla_XFZ, \varphi W)\\
&=X(\ln f)\,g(PZ, PW)+g(h(X, PZ), FW)-g(\varphi\tilde\nabla_XFZ, W)\\
&=\cos^2\theta\,X(\ln f)\,g(Z, W)+g(h(X, PZ), FW)-g(\tilde\nabla_X\varphi FZ, W).
\end{align*}
From (\ref{2.9}) and (\ref{2.14}), we derive
\begin{align}
\label{4.4}
\notag g(\tilde\nabla_XZ, W)&=\cos^2\theta\,X(\ln f)\,g(Z, W)+g(h(X, PZ), FW)+\sin^2\theta\,g(\tilde\nabla_XZ, W)\\
&+\sin2\theta\,X(\theta)\,g(Z, W)+g(\tilde\nabla_XFPZ, W).
\end{align}
Hence, the result follows from (\ref{4.3}) and (\ref{4.4}) by using (\ref{2.6})-(\ref{2.7}) and (\ref{4.1}).\end{proof}

\begin{lemma}\label{L3} Let $M=M_T\times_fM_\theta$ be a warped product pointwise semi-slant submanifold of a cosymplectic manifold $\tilde M$ such that $\xi\in\Gamma(TM_T)$, where $M_T$ and $M_\theta$ are invariant and pointwise slant submanifolds of $\tilde M$, respectively. Then
\begin{enumerate}
\item[(i)] $\xi(\ln f)=0,$
\item[(ii)] $g(h(X, Y), FZ)=0,$
\item[(iii)]  $g(h(X, Z), FW)=X(\ln f)\,g(PZ, W)-\varphi X(\ln f)\,g(Z, W)$
\end{enumerate}
for any $X, Y\in\Gamma(TM_T)$ and $Z, W\in\Gamma(TM_\theta)$.
\end{lemma}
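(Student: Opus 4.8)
The three identities are all consequences of the same basic computation, namely expanding $g(\tilde\nabla_X Z, W)$ (or related brackets) using $\varphi$-compatibility, the Sasakian condition \eqref{2.3}, the Gauss--Weingarten formulas, and the warped product relation \eqref{4.1}. The novelty compared with Lemma 3 is the appearance of the term $\tilde\nabla_X\xi = -\varphi X$ from \eqref{2.4}, which is what produces the $\xi(\ln f)$ and $\eta(X)$ terms below.

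\smallskip
For part (i): I would start from $g(\tilde\nabla_Z \xi, W)$. On one hand $\tilde\nabla_Z\xi = -\varphi Z = -PZ - FZ$, so $g(\tilde\nabla_Z\xi, W) = -g(PZ, W)$. On the other hand, since $\xi \in \Gamma(TM_T)$ and $Z, W \in \Gamma(TM_\theta)$, formula \eqref{4.1} gives $\nabla_Z\xi = \xi(\ln f) Z$, hence $g(\tilde\nabla_Z\xi, W) = g(\nabla_Z\xi, W) = \xi(\ln f)\, g(Z, W)$. Equating the two expressions yields $g(PZ, W) = -\xi(\ln f)\, g(Z, W)$. (One should note that this already forces $P$ to have a symmetric part on $M_\theta$ in the warped product direction, reconciling with \eqref{2.10}; the only real subtlety is making sure the $FZ$ component is genuinely normal so it drops out of $g(\cdot, W)$.)

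\smallskip
For part (ii): I would compute $g(h(X,Y), FZ) = g(\tilde\nabla_X Y, FZ)$ by writing $FZ = \varphi Z - PZ$ and using $g(\tilde\nabla_X Y, \varphi Z) = -g(\varphi\tilde\nabla_X Y, Z) = -g(\tilde\nabla_X\varphi Y, Z) + g((\tilde\nabla_X\varphi)Y, Z)$. The Sasakian term $(\tilde\nabla_X\varphi)Y = g(X,Y)\xi - \eta(Y)X$ pairs to zero against $Z \in \Gamma(TM_\theta)$ by orthogonality. Since $M_T$ is invariant, $\varphi Y \in \Gamma(TM_T)$, and because $M_T$ is totally geodesic in $M$ (a standard property of warped products), $\tilde\nabla_X\varphi Y$ has no $TM_\theta$ component, so that term vanishes too; and $g(\tilde\nabla_X Y, PZ) = -X(\ln f) g(\varphi Y, Z) = 0$ again by orthogonality after using \eqref{4.1}. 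Collecting these gives $g(h(X,Y), FZ) = 0$. Alternatively, this drops straight out of Lemma 3(i) with $\theta = $ the slant function: there $\sin^2\theta\, g(\nabla_X Y, Z) = g(h(X,\varphi Y), FZ) - g(h(X,Y), FPZ)$, and combined with part (iii) below one can close the loop, but the direct argument above is cleaner.

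\smallskip
For part (iii): this is the main computation and the one place where care is needed. I would follow the template of Lemma 3, starting from $g(h(X,Z), FW) = g(\tilde\nabla_Z X, FW)$, push $\varphi$ across, invoke \eqref{2.3} and \eqref{2.14} to handle $tFW$ and $fFW$, and use \eqref{4.1} to convert $\tilde\nabla$-terms in the $M_T$ direction into $(\ln f)$-derivatives. The Sasakian identity \eqref{2.3} applied with one slot equal to $X$ and the other in $TM_\theta$ will now contribute a genuine $\eta(X) g(Z,W)$ term (it is no longer killed by orthogonality because one factor can be $\xi$), and $\tilde\nabla_Z\varphi X$ for $\varphi X \in \Gamma(TM_T)$ produces the $\varphi X(\ln f) g(Z,W)$ term via \eqref{4.1}, while the remaining piece assembles into $X(\ln f) g(PZ, W)$. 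The main obstacle will be bookkeeping the several $\sin^2\theta$ and $\cos^2\theta$ coefficients and the $X(\theta)$ terms so that they cancel correctly — in particular, one will want to use part (i) to rewrite $g(PZ,W)$ and Lemma 4's own relation \eqref{4.2} (or Lemma 3) to eliminate the awkward $g(h(\cdot,\cdot), FP\cdot)$ cross-terms — but no single step is deep; it is a matter of carefully chaining the identities \eqref{2.3}, \eqref{2.6}, \eqref{2.8}, \eqref{2.9}, \eqref{2.11}, \eqref{2.14} and \eqref{4.1} in the right order, exactly as in the proof of the preceding lemma.
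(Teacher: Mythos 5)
Your parts (i) and (ii) coincide with the paper's own proof: for (i) one compares the tangential part of $\tilde\nabla_Z\xi=-\varphi Z$, namely $-PZ$, with $\nabla_Z\xi=\xi(\ln f)Z$ from (\ref{4.1}), and for (ii) the same $\varphi$-transfer with the Sasakian term killed by orthogonality and the $PZ$-term killed by (\ref{4.1}) is exactly what the paper does. For part (iii), your primary line (start from $g(h(X,Z),FW)=g(\tilde\nabla_ZX,FW)$, move $\varphi$ across, the Sasakian condition contributes $-\eta(X)\,g(Z,W)$, and $\tilde\nabla_Z\varphi X$ contributes $-\varphi X(\ln f)\,g(Z,W)$ via (\ref{4.1})) is again the paper's computation, but the complications you anticipate do not arise: writing $FW=\varphi W-PW$, the only remaining term is $-g(\tilde\nabla_ZX,PW)=-X(\ln f)\,g(Z,PW)=X(\ln f)\,g(PZ,W)$ by (\ref{4.1}) and (\ref{2.10}), and no $\sin^2\theta$, $\cos^2\theta$, $X(\theta)$ or $g(h(\cdot,\cdot),FP\,\cdot)$ terms ever appear, so (\ref{2.14}) and the proposed cancellation bookkeeping are not needed. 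In fact your fallback of eliminating $FP$-cross-terms via (\ref{4.2}) would not close: interchanging $Z$ and $W$ there trades $g(h(X,Z),FPW)$ for $g(h(X,PW),FZ)$, another unknown of the same type, so that detour is at best circular; the direct computation is the one to carry out. (A small aside on your remark in (i): the right conclusion is not that $P$ acquires a symmetric part, but rather that (i) together with the skew-symmetry (\ref{2.10}) forces $\xi(\ln f)=0$.)
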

\begin{proof} From (\ref{cosy}), (\ref{2.5}) and (\ref{2.8}), we have
\begin{align*}
\nabla_Z\xi+h(Z, \xi)=0,
\end{align*}
for any $Z\in\Gamma(TM_\theta)$. Which implies that $\xi(\ln f)=0$ by using (\ref{4.1}). (ii) is proved in \cite{Park} ( see relation (100) in  \cite{Park}). Now, for any $X\in\Gamma(TM_T)$ and $Z, W\in\Gamma(TM_\theta)$, we have
\begin{align*}
g(h(X, Z),FW)=g(\tilde\nabla_ZX, FW)=g(\tilde\nabla_ZX, \varphi W)-g(\tilde\nabla_ZX, PW).
\end{align*}
Using the covariant derivative formula of the Riemannain connection and (\ref{4.1}), we get
\begin{align*}
g(h(X, Z), FW)=g((\tilde\nabla_Z\varphi)X, W)-g(\tilde\nabla_Z\varphi X, W)-X(\ln f)\,g(Z, PW).
\end{align*}
Then from (\ref{cosy}), (\ref{2.5}) and (\ref{4.1}), we derive
\begin{align*}
g(h(X, Z), FW)=-\varphi X(\ln f)\,g(Z, W)-X(\ln f)\,g(Z, PW),
\end{align*}
which is third part of the lemma. Hence, the proof is complete.\end{proof}

Interchanging $X$ by $\varphi X$, for any $X\in\Gamma(TM_T)$ in Lemma \ref{L3} (iii), we obtain the following relation.
\begin{align}
\label{4.5}
g(h(\varphi X, Z), FW)=X(\ln f)\,g(Z, W)-\varphi X(\ln f)\,g(Z, PW),
\end{align}
for any $X\in\Gamma(TM_T)$ and $Z, W\in\Gamma(TM_\theta)$.

Similarly, Interchange $Z$ by $PZ$, for any $Z\in\Gamma(TM_\theta)$ in Lemma \ref{L3} (iii), we obtain
\begin{align}
\label{4.6}
g(h(X, PZ), FW)=\varphi X(\ln f)\,g(Z, PW)-\cos^2\theta\,X(\ln f)\,g(Z, W)
\end{align}
for any $X\in\Gamma(TM_T)$ and $Z, W\in\Gamma(TM_\theta)$.

Similarly, if we interchange $W$ by $PW$, for any $W\in\Gamma(TM_\theta)$ in Lemma \ref{L3} (iii), then we derive
\begin{align}
\label{4.7}
g(h(X, Z), FPW)=\cos^2\theta\,X(\ln f)\,g(Z, W)-\varphi X(\ln f)\,g(Z, PW),
\end{align}
for any $X\in\Gamma(TM_T)$ and $Z, W\in\Gamma(TM_\theta)$.

\begin{lemma}\label{L7} Let $M=M_T\times_fM_\theta$ be a warped product pointwise semi-slant submanifold of a cosymplectic manifold $\tilde M$ such that $\xi\in\Gamma(TM_T)$, where $M_T$ and $M_\theta$ are invariant and proper pointwise slant submanifolds of $\tilde M$, respectively. Then, we have
\begin{align}
\label{4.8}
g(A_{FW}\varphi X, Z)-g(A_{FPW}X, Z)=\sin^2\theta\,X(\ln f)\,g(Z, W),
\end{align}
for any $X\in\Gamma(TM_T)$ and $Z, W\in\Gamma(TM_\theta)$.
\end{lemma}
\begin{proof} Subtracting (\ref{4.7}) from (\ref{4.5}), we get (\ref{4.8}).
\end{proof}

A warped product submanifold $M=M_1\times_fM_2$  is {\it{mixed totally geodesic}} if $h(X, Z)=0$, for any $X\in\Gamma(TM_1)$ and $Z\in\Gamma(TM_2)$.

From Lemma \ref{L7}, we obtain the following result.
\begin{theorem}\label{T2} Let $M=M_T\times_fM_\theta$ be a warped product pointwise semi-slant submanifold of a cosymplectic manifold $\tilde M$. If $M$ is mixed totally geodesic, then either $M$ is warped product of invariant submanifolds or the warping function $f$ is constant on $M$.
\end{theorem}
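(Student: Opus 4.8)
The plan is to substitute the mixed‑totally‑geodesic hypothesis into the identity \eqref{4.8} of Lemma~7. First I would convert the shape operators on the left of \eqref{4.8} into second fundamental form terms via \eqref{2.7}, writing
\[
g(A_{FW}\varphi X, Z) - g(A_{FPW}X, Z) = g(h(\varphi X, Z), FW) - g(h(X, Z), FPW),
\]
for $X\in\Gamma(TM_T)$ and $Z,W\in\Gamma(TM_\theta)$. Since $M_T$ is invariant and $\xi\in\Gamma(TM_T)$ with $\varphi\xi=0$, the tensor $\varphi$ maps $TM_T$ into itself, so $\varphi X\in\Gamma(TM_T)$. Hence, if $M$ is mixed totally geodesic, both $h(X,Z)$ and $h(\varphi X,Z)$ vanish, the left‑hand side of \eqref{4.8} is zero, and the identity collapses to
\[
\sin^2\theta\, X(\ln f)\, g(Z, W) = 0
\]
for all such $X,Z,W$.

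Next I would specialize. If $\dim M_\theta=0$ there is nothing to prove, so assume $\dim M_\theta\ge 1$ and take $Z=W$ a local unit vector field on $M_\theta$, obtaining $\sin^2\theta\, X(\ln f)=0$ at every point of $M$ and for every $X\in\Gamma(TM_T)$. This yields the stated alternative. Either $\sin^2\theta\equiv 0$, i.e.\ the slant function $\theta$ is identically $0$; then $\varphi$ preserves $TM_\theta$, so $M_\theta$ is an invariant submanifold and $M=M_T\times_fM_\theta$ is a warped product of two invariant submanifolds. Or $\sin^2\theta$ is not identically zero, and then on the nonempty open set where $\sin^2\theta>0$ we get $X(\ln f)=0$ for all $X\in\Gamma(TM_T)$; recalling that $f$ is a function on the base $M_T$ (so that $Z(\ln f)=0$ for $Z\in\Gamma(TM_\theta)$ automatically), $f$ is constant.

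The one step that requires genuine care — and the place I expect the real subtlety to lie — is this last passage from the pointwise relation $\sin^2\theta\, X(\ln f)=0$ to the clean global dichotomy, since a priori $\theta$ could vanish on part of $M$ while $\operatorname{grad} f$ vanishes on the complement. I would handle it in the usual way for warped‑product pointwise‑slant submanifolds: regarding $\theta$ as the slant function of the fiber and $f$ as the warping function on the base, the vanishing of $\sin^2\theta$ at a single point already forces $X(\ln f)\equiv 0$ on all of $M_T$, and symmetrically, so the two cases truly separate; this is bookkeeping about the product structure rather than further computation. (As a side observation, one could even shortcut \eqref{4.8}: from $\tilde\nabla_Z\xi=-\varphi Z$ and the Gauss formula \eqref{2.5} one reads off $h(Z,\xi)=-FZ$, so mixed totally geodesicness already forces $FZ=0$ and hence $M_\theta$ invariant — but the argument through Lemma~7 is the one the section is built around.)
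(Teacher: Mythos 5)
Your proposal is correct and follows essentially the same route as the paper: the paper's proof of this theorem is exactly the substitution of the mixed totally geodesic hypothesis into \eqref{4.8} of Lemma~7, yielding $\sin^2\theta\,X(\ln f)\,g(Z,W)=0$ and hence the stated dichotomy. Your extra remarks (the pointwise-versus-global separation of the two cases, and the shortcut $h(Z,\xi)=-FZ$) go beyond what the paper records but are consistent with it.
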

\begin{proof} From (\ref{4.8}) and the mixed totally geodesic condition, we have
\begin{align*}
\sin^2\theta\,X(\ln f)\,g(Z, W)=0.
\end{align*}
Since $g$ is a Riemannian metric, then either $\sin^2\theta=0$ or $X(\ln f)=0$. Therefore, either $M$ is warped product of invariant submanifolds or $f$ is constant on $M$, thus the proof is complete.\end{proof}
\begin{lemma}\label{L8} Let $M=M_T\times_fM_\theta$ be a warped product pointwise semi-slant submanifold of a cosymplectic manifold $\tilde M$ such that $\xi\in\Gamma(TM_T)$, where $M_T$ and $M_\theta$ are invariant and pointwise slant submanifolds of $\tilde M$, respectively. Then, we have
\begin{align}
\label{4.9}
g(A_{FPZ}W, X)-g(A_{FW}PZ, X)=2\cos^2\theta\,X(\ln f)\,g(Z, W)
\end{align}
for any $X\in\Gamma(TM_T)$ and $Z, W\in\Gamma(TM_\theta)$.
\end{lemma}
\begin{proof} Interchanging $Z$ and $W$ in (\ref{4.7}) and using (\ref{2.10}), we get
\begin{align}
\label{4.10}
g(h(X, W), FPZ)=\cos^2\theta\,X(\ln f)\,g(Z, W)+\varphi X(\ln f)\,g(Z, PW),
\end{align}
for any $X\in\Gamma(TM_T)$ and $Z, W\in\Gamma(TM_\theta)$. Subtracting (\ref{4.6}) from (\ref{4.10}), we find (\ref{4.9}).\end{proof}

Also, with the help of Lemma \ref{L8}, we find the following result.
\begin{theorem}\label{T3} Let $M=M_T\times_fM_\theta$ be a warped product pointwise semi-slant submanifold of a cosymplectic manifold $\tilde M$. If $M$ is mixed totally geodesic, then either $M$ is a contact CR-warped product of the form $M_T\times_fM_\perp$ or the warping function $f$ is constant on $M$.
\end{theorem}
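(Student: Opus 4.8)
The plan is to run essentially the same argument as in the proof of Theorem 3, but to feed the mixed totally geodesic hypothesis into Lemma 8 rather than into Lemma 7. First I would record what the hypothesis gives us here: $h(X,Z)=0$ for all $X\in\Gamma(TM_T)$ and $Z\in\Gamma(TM_\theta)$, hence by (\ref{2.7}) also $g(A_NX,Z)=0$ for every normal field $N$. I would also note that $P$ preserves $\Gamma(TM_\theta)$: for $Z\in\Gamma(TM_\theta)$ one has $g(PZ,U)=-g(Z,\varphi U)=0$ for all $U\in\Gamma(\mathcal{D})$ and $g(PZ,\xi)=-g(Z,\varphi\xi)=0$, so $PZ\in\Gamma(TM_\theta)$; this is what lets us apply mixed total geodesy to the term involving $PZ$.

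Next I would rewrite the left-hand side of (\ref{4.9}) via (\ref{2.7}): $g(A_{FPZ}W,X)=g(h(X,W),FPZ)$ and $g(A_{FW}PZ,X)=g(h(X,PZ),FW)$. Since $W$ and $PZ$ lie in $\Gamma(TM_\theta)$ while $X\in\Gamma(TM_T)$, the mixed totally geodesic condition makes both terms vanish, so (\ref{4.9}) collapses to
\[
2\cos^2\theta\,X(\ln f)\,g(Z,W)=0
\]
for all $X\in\Gamma(TM_T)$ and $Z,W\in\Gamma(TM_\theta)$. Taking $W=Z$ to be a nonzero vector field on the (positive-dimensional) fiber $M_\theta$ and using that $g$ is positive definite, this yields $\cos^2\theta\,X(\ln f)=0$ for every $X\in\Gamma(TM_T)$.

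From here the conclusion is the usual alternative, exactly as in Theorem 3. Either $\cos^2\theta=0$, that is $\theta=\frac{\pi}{2}$, so that $M_\theta$ is an anti-invariant submanifold; writing $M_\theta=M_\perp$, the warped product $M=M_T\times_fM_\perp$ is then a contact CR-warped product. Or $X(\ln f)=0$ for all $X$ tangent to $M_T$, and since the warping function $f$ is a function on the base $M_T$ alone, this forces $f$ to be constant on $M$.

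The computations are all routine (one short use of (\ref{2.7}) per term). The only place that is more than a formal manipulation is the passage from the pointwise statement ``$\cos^2\theta=0$ or $X(\ln f)=0$'' to the global dichotomy in the theorem; I would handle it precisely as Theorem 3 does, observing that if $f$ is non-constant then $X(\ln f)$ is nonzero on a nonempty open subset of $M_T$, which forces $\cos^2\theta\equiv 0$ there, and then state the conclusion in the same spirit as Theorem 3. That is the one step where a little care is needed.
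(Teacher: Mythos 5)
Your proposal is correct and follows essentially the same route as the paper: the paper's proof of this theorem likewise feeds the mixed totally geodesic hypothesis into Lemma 8 (equation (\ref{4.9})) to obtain $\cos^2\theta\,X(\ln f)\,g(Z,W)=0$ and then reads off the dichotomy $\theta=\tfrac{\pi}{2}$ (contact CR-warped product) or $f$ constant. Your extra checks — that $PZ$ stays tangent to $M_\theta$ and the conversion of the shape-operator terms via (\ref{2.7}) — are details the paper leaves implicit, so there is no substantive difference.
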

\begin{proof} From (\ref{4.9}) and the mixed totally geodesic condition, we have
\begin{align*}
\cos^2\theta\,X(\ln f)\,g(Z, W)=0.
\end{align*}
Since $g$ is a Riemannian metric, then either $\cos^2\theta=0$ or $X(\ln f)=0$. Therefore, either $M$ is a contact CR-warped product or $f$ is constant on $M$, which ends the proof.
\end{proof}

From Theorem  \ref{T2} and Theorem \ref{T3}, we conclude that:
\begin{corollary}\label{C1} There does not exist any mixed totally geodesic proper warped product pointwise semi-slant submanifold $M=M_T\times_fM_\theta$ of a cosymplectic manifold.
\end{corollary}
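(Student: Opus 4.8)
The plan is to combine Theorem~2 and Theorem~3, or equivalently to feed the mixed totally geodesic hypothesis into the two relations (\ref{4.8}) and (\ref{4.9}) of Lemmas~7 and~8 simultaneously. Suppose, for contradiction, that $M=M_T\times_fM_\theta$ is a mixed totally geodesic \emph{proper} warped product pointwise semi-slant submanifold of a Sasakian manifold $\tilde M$ with $\xi\in\Gamma(TM_T)$. Properness means $M_\theta$ is a proper pointwise slant submanifold, so the slant function $\theta$ is not identically $0$ and not identically $\frac{\pi}{2}$; and, being a genuine (non-trivial) warped product, $M$ has a warping function $f$ that is not constant.

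First I would unwind the mixed totally geodesic hypothesis at the level of the shape operator. Since $M_T$ is $\varphi$-invariant, $\varphi X\in\Gamma(TM_T)$ for every $X\in\Gamma(TM_T)$, and since $M_\theta$ is pointwise slant, $PZ\in\Gamma(TM_\theta)$ for every $Z\in\Gamma(TM_\theta)$; hence $h(X,Z)=h(\varphi X,Z)=h(X,PZ)=0$ for all $X\in\Gamma(TM_T)$, $Z\in\Gamma(TM_\theta)$. By (\ref{2.7}) this makes the entire left-hand side of both (\ref{4.8}) and (\ref{4.9}) vanish, so that
\begin{align*}
\sin^2\theta\,X(\ln f)\,g(Z,W)=0,\qquad \cos^2\theta\,X(\ln f)\,g(Z,W)=0
\end{align*}
for all $X\in\Gamma(TM_T)$ and $Z,W\in\Gamma(TM_\theta)$. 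Choosing $W=Z$ with $Z\neq 0$ and dividing by $g(Z,Z)>0$ gives $\sin^2\theta\,X(\ln f)=\cos^2\theta\,X(\ln f)=0$ pointwise on $M$; adding these and using $\sin^2\theta+\cos^2\theta=1$ yields $X(\ln f)=0$ for every $X\in\Gamma(TM_T)$. Since $f$ depends only on the points of $M_T$, this forces $f$ to be constant, contradicting that $M$ is a proper warped product. Phrased through the two theorems: Theorem~2 says a mixed totally geodesic $M$ is either a warped product of invariant submanifolds ($\theta\equiv 0$) or has $f$ constant, and Theorem~3 says it is either a contact CR-warped product ($\theta\equiv\frac{\pi}{2}$) or has $f$ constant; no alternative is consistent with $M$ being a proper warped product pointwise semi-slant submanifold.

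I do not expect a genuine obstacle here; the corollary is essentially a bookkeeping consequence of Lemmas~7 and~8. The only points needing a little care are that the dichotomies in Theorems~2 and~3 are pointwise in nature, so one should not conclude from a single theorem alone but should add the two vanishing relations $\sin^2\theta\,X(\ln f)=0$ and $\cos^2\theta\,X(\ln f)=0$ to eliminate $\theta$ altogether, and that one should make explicit that a proper warped product has non-constant warping function, which is what turns ``$f$ is constant'' into an actual contradiction.
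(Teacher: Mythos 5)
Your proposal is correct and follows essentially the same route as the paper, which deduces the corollary by combining Theorem~2 and Theorem~3 (equivalently, relations (\ref{4.8}) and (\ref{4.9}) under the mixed totally geodesic hypothesis). Your pointwise addition of $\sin^2\theta\,X(\ln f)=0$ and $\cos^2\theta\,X(\ln f)=0$ is a slightly more careful way of packaging the same dichotomies, but it is the same argument in substance.
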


Also, from Lemma \ref{L2} and Lemma \ref{L8}, we have the following result.

\begin{theorem}\label{T4} Let $M=M_T\times_fM_\theta$ be a warped product pointwise semi-slant submanifold of a cosymplectic manifold $\tilde M$ such that $\xi\in\Gamma(TM_T)$, where $M_T$ is an invariant submanifold and $M_\theta$ is a pointwise slant submanifold of $\tilde M$. Then, either $M$ is a contact CR-warped product of the form $M=M_T\times_fM_\perp$ or $\nabla(\ln f)=\tan\theta\,\nabla \theta$, for any $X\in\Gamma(TM_T)$, where $\nabla f$ is the gradient of $f$.
\end{theorem}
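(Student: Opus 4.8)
The plan is to play Lemma 2 and Lemma 8 against each other after rewriting the shape operators in (\ref{4.9}) in terms of the second fundamental form via (\ref{2.7}). For $X\in\Gamma(TM_T)$ and $Z,W\in\Gamma(TM_\theta)$, the identity $g(A_NU,V)=g(h(U,V),N)$ together with the symmetry $h(U,V)=h(V,U)$ gives $g(A_{FPZ}W,X)=g(h(X,W),FPZ)$ and $g(A_{FW}PZ,X)=g(h(X,PZ),FW)$, so (\ref{4.9}) becomes
\begin{align*}
g(h(X,W),FPZ)-g(h(X,PZ),FW)=2\cos^2\theta\,X(\ln f)\,g(Z,W).
\end{align*}
The left-hand side is precisely the left-hand side of (\ref{4.2}) in Lemma 2, which equals $\sin2\theta\,X(\theta)\,g(Z,W)$.

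Equating the two expressions yields $\sin2\theta\,X(\theta)\,g(Z,W)=2\cos^2\theta\,X(\ln f)\,g(Z,W)$ for all such $X,Z,W$. Taking $W=Z$ with $Z$ a nonzero local section of $TM_\theta$, so that $g(Z,Z)>0$ since $g$ is Riemannian, and cancelling $g(Z,Z)$, then using $\sin2\theta=2\sin\theta\cos\theta$, we arrive at
\begin{align*}
\cos\theta\bigl(\sin\theta\,X(\theta)-\cos\theta\,X(\ln f)\bigr)=0
\end{align*}
at every point of $M$ and for every $X\in\Gamma(TM_T)$.

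Now I would split into cases pointwise. If $\cos\theta=0$, i.e. $\theta=\frac{\pi}{2}$, then $M_\theta$ is anti-invariant and $M=M_T\times_fM_\theta$ is a contact CR-warped product $M_T\times_fM_\perp$. Otherwise $\cos\theta\neq0$; dividing by $\cos\theta$ gives $\cos\theta\,X(\ln f)=\sin\theta\,X(\theta)$, that is $X(\ln f)=\tan\theta\,X(\theta)$ for every $X\in\Gamma(TM_T)$. Since the differential of a function along $M_T$ is determined by its action on all such $X$, this is exactly $\nabla(\ln f)=\tan\theta\,\nabla\theta$, and the dichotomy is established.

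There is no real obstacle here: the whole argument is the algebraic comparison of two previously established identities. The only points needing a little care are (a) converting $A_N$ into $h$ correctly by combining (\ref{2.7}) with the symmetry of $h$, (b) justifying the cancellation of $g(Z,W)$ by specializing to $W=Z\neq0$ and using positive-definiteness of $g$, and (c) noting that the dichotomy should be read pointwise — on the open subset where $\theta<\frac{\pi}{2}$ one obtains the gradient relation, while on its complement the contact CR structure holds.
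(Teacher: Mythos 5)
Your proposal is correct and follows essentially the same route as the paper: the paper's proof also simply combines Lemma 2 with Lemma 8 (the shape-operator terms in (\ref{4.9}) being the same as the $h$-terms in (\ref{4.2}) via (\ref{2.7})) to get $\cos^2\theta\,\{X(\ln f)-\tan\theta\,X(\theta)\}\,g(Z,W)=0$ and then reads off the dichotomy. Your extra care in cancelling $g(Z,W)$ by taking $W=Z$ and in reading the alternative pointwise is a slight refinement of the same argument, not a different one.
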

\begin{proof} From Lemma \ref{L2} and Lemma \ref{L8}, we have
\begin{align*}
\cos^2\theta\{X(\ln f)-\tan\theta\,X(\theta)\}\,g(Z, W)=0.
\end{align*}
Since $g$ is a Riemannian metric, therefore we conclude that either $\cos^2\theta=0$ or $X(\ln f)-\tan\theta\,X(\theta)=0$. Consequently, either $\theta=\frac{\pi}{2}$ or $X(\ln f)=\tan\theta\,X(\theta)$, which proves the theorem completely.\end{proof}

As an application of Theorem \ref{T4}, we have the following consequence.
\begin{remark} {\rm{If we consider that the slant function $\theta$ is constant, i.e., $M_\theta$ is a proper slant submanifold in Theorem  \ref{T4}, then $Z(\ln f)=0$, i.e., there are no warped product semi-slant submanifolds of the form $M_T\times_fM_\theta$ in cosymplectic manifolds. Hence, Theorem 4.1 of \cite{Khan}  is a special case of Theorem \ref{T4}.}}\end{remark} 
In order to give a characterization result for pointwise semi-slant submanifolds of a cosymplectic manifold, we need the following well-known result of Hiepko \cite{Hi}.\\ 

\noindent{\bf{Hiepko's Theorem.}} {\rm{Let ${\mathfrak{D}}_1$ and ${\mathfrak{D}}_2$ be two orthogonal distribution on a Riemannian manifold $M$. Suppose that both ${\mathfrak{D}}_1$ and ${\mathfrak{D}}_2$ are involutive such that ${\mathfrak{D}}_1$ is a totally geodesic foliation and ${\mathfrak{D}}_2$ is a spherical foliation. Then $M$ is locally isometric to a non-trivial warped product $M_1\times_fM_2$, where $M_1$ and $M_2$ are integral manifolds of ${\mathfrak{D}}_1$ and ${\mathfrak{D}}_2$, respectively.}}

\begin{theorem}\label{T5} Let $M$ be a pointwise semi-slant submanifold of a cosymplectic manifold $\tilde M$. Then $M$ is locally a non-trivial warped product submanifold of the form $M_T\times_fM_\theta$, where $M_T$ is an invariant submanifold and $M_\theta$ is a proper pointwise slant submanifold of $\tilde M$ if and only if 
\begin{align}
\label{4.11}
A_{FW}\varphi X-A_{FPW}X=\sin^2\theta\,X(\mu)W,\,\,\,\,\forall\,X\in\Gamma({\mathfrak{D}}\oplus\langle\xi\rangle),\,\,W\in\Gamma({\mathfrak{D}}^\theta),
\end{align}
for some smooth function $\mu$ on $M$ satisfying $Z(\mu)=0$, for any $Z\in\Gamma({\mathfrak{D}}^\theta)$.
\end{theorem}
\begin{proof} Let $M=M_T\times_fM_\theta$ be a warped product pointwise semi-slant submanifold of a cosymplectic manifold $\tilde M$. Then for any $X\in\Gamma(TM_T)$ and $Z, W\in\Gamma(TM_\theta)$, from Lemma \ref{L3} (ii) we have
\begin{align}
\label{4.12}
g(A_{FW}X, Y)=0.
\end{align}
Interchanging $X$ by $\varphi X$ in (\ref{4.12}), we get $g(A_{FW}\varphi X, Y)=0$, which means that $A_{FW}\varphi X$ has no component in $TM_T$. Similarly, if we interchange $W$ by $PW$ in (\ref{4.12}) then, we get $g(A_{FPW}X, Y)=0$, i.e., $A_{FPW}X$ also has no component in $TM_T$. Therefore, $A_{FW}\varphi X-A_{FPW}X$ lies in $TM_\theta$, using this fact with Lemma \ref{L7}, we find (\ref{4.11}).

Conversely, if $M$ is a pointwise semi-slant submanifold such that (\ref{4.11}) holds, then from Lemma \ref{L1} (i), we have
\begin{align*}
g(\nabla_XY, W)=\csc^2\theta\,g(A_{FW}\varphi Y-A_{FPW}Y, X),
\end{align*}
for any $X, Y\in\Gamma({\mathfrak{D}}\oplus\langle\xi\rangle)$ and $W\in\Gamma({\mathfrak{D}}^\theta)$. From (\ref{4.11}), we arrive at
\begin{align*}
g(\nabla_XY, W)=Y(\mu)g(X, W)=0,
\end{align*}
which means that the leaves of the distribution ${\mathfrak{D}}\oplus\langle\xi\rangle$ are totally geodesic in $M$. Also, from Lemma \ref{L1} (ii), we have
\begin{align}
\label{4.13}
g(\nabla_ZW, X)=\csc^2\theta\,g(A_{FPW}X-A_{FW}\varphi X, Z),
\end{align}
for any $Z, W\in\Gamma({\mathfrak{D}}^\theta)$ and $X\in\Gamma({\mathfrak{D}}\oplus\langle\xi\rangle)$. By polarization, we derive
\begin{align}
\label{4.14}
g(\nabla_WZ, X)=\csc^2\theta\,g(A_{FPZ}X-A_{FZ}\varphi X, W).
\end{align}
Subtracting (\ref{4.14}) from (\ref{4.13}), we get
\begin{align*}
\sin^2\theta\,g([Z, W], X)=g(A_{FZ}\varphi X-A_{FPZ}X, W)-g(A_{FW}\varphi X-A_{FPW}X, Z).
\end{align*}
Using (\ref{4.11}), we get
\begin{align*}
\sin^2\theta\,g([Z, W], X)=X(\mu)\,g(Z, W)-X(\mu)\,g(W, Z)=0.
\end{align*}
Since $M$ is proper pointwise semi-slant, then $\sin^2\theta\neq0$, thus we conclude that the pointwise slant distribution ${\mathfrak{D}}^\theta$ is integrable. Let us consider $M_\theta$ be a leaf of ${\mathfrak{D}}^\theta$ and $h^\theta$ is the second fundamental form of $M_\theta$ in $M$. Then from (\ref{4.14}), we have
\begin{align*}
g(h^\theta(Z, W), X)=g(\nabla_ZW, X)=-\csc^2\theta\,g(A_{FW}\varphi X-A_{FPW}X, Z)
\end{align*}
Using (\ref{4.11}), we find that
\begin{align*}
g(h^\theta(Z, W), X)=-X(\mu)\,g(Z, W).
\end{align*}
Then from the definition of the gradient of a function, we arrive at
\begin{align*}
h^\theta(Z, W)=-(\vec\nabla\mu)\,g(Z, W).
\end{align*}
Hence, $M_\theta$ is a totally umbilical submanifold of $M$ with the mean curvature vector $H^\theta=-\vec\nabla\mu$, where $\vec\nabla\mu$ is the gradient of the function $\mu$. Since $Z(\mu)=0$, for any $Z\in\Gamma({\mathfrak{D}}^\theta)$, then we can show that $H^\theta=-\vec\nabla\mu$ is parallel with respect to the normal connection, say $D^n$ of $M_\theta$ in $M$ (see  \cite{Sahin2,Sahin3}, \cite{U2}). Thus, $M_\theta$ is a totally umbilical submanifold of $M$ with a non vanishing parallel mean curvature vector $H^\theta=-\vec\nabla\mu$, i.e., $M_\theta$ is an extrinsic sphere in $M$. Then from Heipko's Theorem \cite{Hi}, we conclude that $M$ is a warped product manifold of $M_T$ and $M_\theta$, where $M_T$ and $M_\theta$ are integral manifolds of ${\mathfrak{D}}\oplus\langle\xi\rangle$ and ${\mathfrak{D}}^\theta$, respectively. Thus, the proof is complete.
\end{proof}
As an application of Theorem \ref{T5}, if we consider $\theta=\frac{\pi}{2}$ in Theorem \ref{T5}, then we have the following result as a special case of Theorem \ref{T5}.
\begin{corollary}\label{C2} {\rm{(Theorem 4.2 of \cite{UK11})}} A proper CR-submanifold $M$ of a cosymplectic manifold $\tilde M$, and tangent to the structure vector field $\xi$ is locally a contact CR-warped product if and only if 
\begin{align}
 A_{\varphi Z}X=-\left(\varphi X(\mu)\right)Z,\,\,\,X\in\Gamma({\mathfrak{D}}\oplus\langle\xi\rangle),\,\,Z\in\Gamma({\mathfrak{D}}^\perp),
 \end{align}
for some function $\mu$ on $M$ satisfying $W\mu= 0$ for all $W\in\Gamma({\mathfrak{D}}^\perp)$.
\end{corollary}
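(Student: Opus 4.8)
The plan is to deduce the stated corollary from Theorem 5 by specializing the slant function to the constant value $\theta\equiv\frac{\pi}{2}$. First I would set up the dictionary between the two situations. By Theorem 1, $\theta\equiv\frac{\pi}{2}$ forces $P\equiv 0$ on ${\mathcal{D}}^\theta$ (indeed $g(PZ,PZ)=\cos^2\theta\,[g(Z,Z)-\eta(Z)^2]=0$ by (\ref{2.12})), so ${\mathcal{D}}^\theta$ becomes the anti-invariant distribution ${\mathcal{D}}^\perp$, one has $\varphi Z=FZ\in\Gamma(T^\perp M)$ for $Z\in\Gamma({\mathcal{D}}^\perp)$, and a pointwise semi-slant submanifold is then exactly a contact CR-submanifold tangent to $\xi$. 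Since $\sin^2\theta=1\neq 0$ automatically, ``proper'' in the sense of Theorem 5 (both ${\mathcal{D}}$ and ${\mathcal{D}}^\perp$ nontrivial) reads as ``strictly proper CR'', and $M_T\times_fM_\theta$ becomes a contact CR-warped product $M_T\times_fM_\perp$. Thus the global conclusions of Theorem 5 and of the stated corollary coincide for $\theta\equiv\frac{\pi}{2}$, and it remains only to reduce the characterization identity (\ref{4.11}) to the one claimed here.

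For the forward implication I would put $\theta=\frac{\pi}{2}$ in (\ref{4.11}): as $\sin^2\theta=1$, $PW=0$, $FW=\varphi W$ and $FPW=0$, it collapses to $A_{\varphi W}\varphi X=X(\mu)\,W$ for all $X\in\Gamma({\mathcal{D}}\oplus\langle\xi\rangle)$, $W\in\Gamma({\mathcal{D}}^\perp)$. Following the remark preceding the corollary, I would replace $X$ by $\varphi X$ (legitimate since $\varphi X\in\Gamma({\mathcal{D}})$) and use $\varphi^2X=-X+\eta(X)\xi$ from (\ref{2.1}) to obtain
\begin{align*}
-A_{\varphi W}X+\eta(X)\,A_{\varphi W}\xi=\varphi X(\mu)\,W.
\end{align*}
The only new computation needed is $A_{\varphi W}\xi$: from (\ref{2.4}), the Gauss formula (\ref{2.5}) and (\ref{2.8}) one gets $h(\xi,Y)=-FY$, so by (\ref{2.7}) $g(A_{\varphi W}\xi,Y)=-g(FY,\varphi W)$, which vanishes for $Y\in\Gamma({\mathcal{D}}\oplus\langle\xi\rangle)$ and equals $-g(Y,W)$ for $Y\in\Gamma({\mathcal{D}}^\perp)$ by (\ref{2.2}); hence $A_{\varphi W}\xi=-W$. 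Plugging this back and solving for $A_{\varphi W}X$ produces, after rearrangement, the identity of the corollary with $\mu=\ln f$; the side condition ``$W\mu=0$ for all $W\in\Gamma({\mathcal{D}}^\perp)$'' is literally ``$Z(\mu)=0$ for $Z\in\Gamma({\mathcal{D}}^\theta)$'' from Theorem 5.

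For the converse I would run the converse of Theorem 5 with $\theta=\frac{\pi}{2}$. Substituting the corollary's identity into Lemma 1 (with $\csc^2\theta=1$, $PZ=PW=0$, $FZ=\varphi Z$), part (i) gives $g(\nabla_XY,Z)=g(A_{\varphi Z}X,\varphi Y)=(\eta(X)-\varphi X(\mu))\,g(Z,\varphi Y)=0$, so the leaves of ${\mathcal{D}}\oplus\langle\xi\rangle$ are totally geodesic; part (ii) then shows ${\mathcal{D}}^\perp$ is integrable and that each leaf $M_\perp$ is totally umbilical with mean curvature vector field $H^\perp=-\vec\nabla\mu$, which, because $Z(\mu)=0$ on ${\mathcal{D}}^\perp$, is parallel in the normal connection — so $M_\perp$ is an extrinsic sphere and Hiepko's Theorem yields the local decomposition $M=M_T\times_fM_\perp$. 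The main obstacle is purely one of bookkeeping: since $\xi$ lies in $\ker\varphi$, the substitution $X\mapsto\varphi X$ recovers information only on the ${\mathcal{D}}$-component, so the $\xi$-direction must be supplied separately through the explicit value of $A_{\varphi W}\xi$; and one has to keep straight that the sign of the $\eta(X)$-term in the displayed identity depends on the chosen form of the Sasakian structure equation, so that the final formula lines up on the nose with Theorem 3.2 of \cite{Mun}.
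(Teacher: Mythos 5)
Your proposal is correct and takes essentially the same route as the paper: the paper's entire proof of this corollary is the sentence preceding it (put $\theta=\frac{\pi}{2}$ in Theorem 5 and replace $X$ by $\varphi X$ in (\ref{4.11})), and you carry out precisely this specialization, merely supplying the details the paper leaves implicit, namely $P=0$ and $F=\varphi$ on ${\mathcal{D}}^\perp$, the computation $A_{\varphi W}\xi=-W$ needed to absorb the $\eta(X)\xi$ term from $\varphi^2X$, and the (redundant but harmless) re-run of the converse. The sign ambiguity on the $\eta(X)$-term that you flag is genuine but is inherited from the paper quoting Munteanu's statement in his own conventions; with this paper's conventions the reduction yields $A_{\varphi Z}X=-\left(\eta(X)+\varphi X(\mu)\right)Z$, exactly as your computation (and Lemma 3 (iii) at $\theta=\frac{\pi}{2}$) shows.
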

\section{Examples}
In this section, we provide the following non-trivial examples of Riemannian products and pointwise semi-slant warped products in Euclidean spaces.
\begin{example}\label{E3}
\rm{Let $M$ be a submanifold of Euclidean 7-space ${\mathbb{R}}^{7}$ with its cartesian coordinates $(x_1,\,\cdots,\,x_3,\,y_1,\cdots,\,y_3,\, t)$ and the almost contact structure
 \begin{align*}
\varphi\left(\frac{\partial}{\partial x_i}\right)=-\frac{\partial}{\partial y_i},\quad
\varphi\left(\frac{\partial}{\partial y_j}\right)=\frac{\partial}{\partial x_j},
\quad\varphi\left(\frac{\partial}{\partial t}\right)=0,\,\,1\leq i,j\leq 3.
 \end{align*}
If $M$ is given by the equations
 \begin{align*}
x_1=u_1,\,x_2=u_3\cos u_4,\,x_3=u_3^2/2,\,y_1=u_2,\,y_2=u_3\sin u_4,\,y_3=u_4,\,t=t,
\end{align*}
for any non-zero function $u_3$ on $M$, then tangent space $TM$ of $M$ is spanned by $X_1,\,X_2,\,X_3,\,X_4$ and $X_5$, where 
\begin{align*}
&X_1=\frac{\partial}{\partial x_1},\quad X_2=\frac{\partial}{\partial y_1},\quad X_3=\cos u_4\frac{\partial}{\partial x_2}+u_3\frac{\partial}{\partial x_3}+\sin u_4\frac{\partial}{\partial y_2},\\
&X_4=-u_3\sin u_4\frac{\partial}{\partial x_2}+u_3\sin u_4\,\frac{\partial}{\partial y_2}+\frac{\partial}{\partial y_3},\quad X_5=\frac{\partial}{\partial t}.
\end{align*}
Then, $M$ is a pointwise semi-slant submanifold with invariant distribution ${\mathfrak{D}}=\rm{Span}\{X_1, X_2\}$ and the pointwise slant distribution ${\mathfrak{D}^{\theta}}=\rm{Span}\{X_3, X_4\}$. Clearly, the slant function is $\theta=\cos^{-1}(2u_3/\sqrt{1+u_3^2})$. Moreover,  ${\mathfrak{D}}$ and ${\mathfrak{D}}^\theta$ are integrable. If $M_T$ and $M_\theta$ are integral manifolds of ${\mathfrak{D}}$ and ${\mathfrak{D}^{\theta}}$, respectively, then, $M=M_T\times M_\theta$ is a Riemannian product of  $M_T$ and $M_\theta$ in ${\mathbb{R}}^{9}$.}
\end{example}
\begin{example}\label{E4}
\rm{Consider the Euclidean 9-space ${\mathbb{R}}^{9}$ with its cartesian coordinates $(x_1,\,\cdots,\,x_4,\,y_1,\cdots,\,y_4,\, t)$ and the almost contact structure
 \begin{align*}
\varphi\left(\frac{\partial}{\partial x_i}\right)=-\frac{\partial}{\partial y_i},\quad\varphi\left(\frac{\partial}{\partial y_j}\right)=\frac{\partial}{\partial x_j},\quad\varphi\left(\frac{\partial}{\partial t}\right)=0,\,1\leq i,j\leq 4.
 \end{align*}
 Let $M$ be a submanifold of ${\mathbb{R}}^{9}$ defined by the immersion $\psi$ as follows:
 \begin{align*}
\psi(u, v, w, s, t)&=(u+v,\,\frac{1}{2}w^2,\,s\cos w, s\sin w,\,-u+v,\,\frac{1}{2}s^2,\,-w\sin s,\, w\cos s\,, t)
\end{align*}
for any non-zero functions $w$ and $s$. The tangent space of $M$ is spanned by the following vectors
\begin{align*}
&X_1=\frac{\partial}{\partial x_1}-\frac{\partial}{\partial y_1},\,X_2=\frac{\partial}{\partial x_1}+\frac{\partial}{\partial y_1},\\
&X_3=w\frac{\partial}{\partial x_2}-s\sin w\,\frac{\partial}{\partial x_3}+s\cos w\,\frac{\partial}{\partial x_4}-\sin s\,\frac{\partial}{\partial y_3}+\cos v\,\frac{\partial}{\partial y_4},\\
&X_4=\cos w\,\frac{\partial}{\partial x_3}+\sin w\,\frac{\partial}{\partial x_4}+s\frac{\partial}{\partial y_2}-w\cos s\,\frac{\partial}{\partial y_3}-w\sin s\,\frac{\partial}{\partial y_4},\,X_5=\frac{\partial}{\partial t}.
\end{align*}
Then, $M$ is a pointwise semi-slant submanifold such that the structure vector field $\xi=\frac{\partial}{\partial t}$ is tangent to $M$ and ${\mathfrak{D}}=\rm{Span}\{X_1, X_2\}$ is an invariant distribution and ${\mathfrak{D}^{\theta}}=\rm{Span}\{X_3, X_4\}$ is a pointwise slant distribution with slant function $\theta=\cos^{-1}\left(\frac{\left(1-ws\right)\sin(w-s)-ws}{{1+w^2+s^2}}\right)$. It is easy to observe that both the distributions are integrable. If we denote the integral manifolds of ${\mathfrak{D}}$ and ${\mathfrak{D}^{\theta}}$ by $M_T$ and $M_\theta$, respectively then, $M$ is a Riemannian product of invariant and pointwise slant submanifolds in ${\mathbb{R}}^{9}$.}
\end{example}
\begin{example}\label{E5}
\rm{Let $M$ be a submanifold of ${\mathbb{R}}^{13}$ given by the immersion $\psi:{\mathbb{R}}^{5}\to {\mathbb{R}}^{13}$ as follows:
 \begin{align*}
\psi(u_1, v_1, u_2, v_2, t)=&(u_1-v_1,\,u_1\cos(u_2+v_2),\,u_1\sin(u_2+v_2),\,v_2,\,u_1\cos(u_2-v_2),\\
&u_1\sin(u_2-v_2),\,u_1+v_1,\,v_1\cos(u_2+v_2),\,v_1\sin(u_2+v_2),\, u_2,\\
&v_1\cos(u_2-v_2),\,v_1\sin(u_2-v_2),\, t),
\end{align*}
for non-zero functions $u_1$ and $v_1$. We use the almost contact structure from Example \ref{E4}. Then, we have
\begin{align*}
&X_1=\frac{\partial}{\partial x_1}+\cos(u_2+v_2)\frac{\partial}{\partial x_2}+\sin(u_2+v_2)\frac{\partial}{\partial x_3}+\cos(u_2-v_2)\frac{\partial}{\partial x_5}\\
&\,\,\,\,\,\,\,\,\,+\sin(u_2-v_2)\frac{\partial}{\partial x_6}+\frac{\partial}{\partial y_1},\\
&X_2=-\frac{\partial}{\partial x_1}+\frac{\partial}{\partial y_1}+\cos(u_2+v_2)\frac{\partial}{\partial y_2}+\sin(u_2+v_2)\frac{\partial}{\partial y_3}+\cos(u_2-v_2)\frac{\partial}{\partial y_5}\\
&\,\,\,\,\,\,\,\,\,+\sin(u_2-v_2)\frac{\partial}{\partial y_6},\\
&X_3=-u_1\sin(u_2+v_2)\frac{\partial}{\partial x_2}+u_1\cos(u_2+v_2)\,\frac{\partial}{\partial x_3}-u_1\sin(u_2-v_2)\frac{\partial}{\partial x_5}\\
&\,\,\,\,\,\,\,\,\,+u_1\cos(u_2-v_2)\,\frac{\partial}{\partial x_6}-v_1\sin(u_2+v_2)\,\frac{\partial}{\partial y_2},\,+v_1\cos(u_2+v_2)\,\frac{\partial}{\partial y_3}\\
&\,\,\,\,\,\,\,\,\,+\frac{\partial}{\partial y_4}-v_1\sin(u_2-v_2)\frac{\partial}{\partial y_5}+v_1\cos(u_2-v_2)\,\frac{\partial}{\partial y_6},\\
&X_4=-u_1\sin(u_2+v_2)\frac{\partial}{\partial x_2}+u_1\cos(u_2+v_2)\,\frac{\partial}{\partial x_3}+\frac{\partial}{\partial x_4}+u_1\sin(u_2-v_2)\frac{\partial}{\partial x_5}
\end{align*}
\begin{align*}
&\,\,\,\,\,\,\,\,\,-u_1\cos(u_2-v_2)\,\frac{\partial}{\partial x_6}-v_1\sin(u_2+v_2)\,\frac{\partial}{\partial y_2}\,+v_1\cos(u_2+v_2)\,\frac{\partial}{\partial y_3}\\
&\,\,\,\,\,\,\,\,\,+v_1\sin(u_2-v_2)\frac{\partial}{\partial y_5}-v_1\cos(u_2-v_2)\,\frac{\partial}{\partial y_6},\,X_5=\frac{\partial}{\partial t}.
\end{align*}
By easy and direct computations we find that ${\mathfrak{D}}=\rm{Span}\{X_1, X_2\}$ is an invariant distribution and ${\mathfrak{D}^{\theta}}=\rm{Span}\{X_3, X_4\}$ is a pointwise slant distribution with slant function $\theta=\cos^{-1}\left(\frac{1}{1+2u_1^2+2v_1^2}\right)$. Hence, $M$ is a pointwise semi-slant submanifold of ${\mathbb{R}}^{13}$. It is easy to observe that both the distributions are integrable. If we denote the integral manifolds of ${\mathfrak{D}}$ and ${\mathfrak{D}^{\theta}}$ by $M_T$ and $M_\theta$, respectively then, the product metric structure of $M$ is given by
\begin{align*}
g=4(du_1^2+dv_1^2)+(1+2u_1^2+2v_1^2)(du_2^2+dv_2^2)=g_{M_T}+f^2g_{M_\theta}.
\end{align*}
Hence, $M=M_T\times_fM_\theta$ is a warped product submanifold in ${\mathbb{R}}^{13}$ with warping function $f=\sqrt{1+2u_1^2+2v_1^2}$.}
\end{example}


\noindent {\bf Acknowledgement.} The authors thank Professor Bang-Yen Chen and Professor Kwang Soon Park for pointing out an error in an earlier version of this article.


\begin{thebibliography}{99}
\bibitem{Al} F. R. Al-Solamy and V. A. Khan, {\it{Warped product semi-slant submanifolds of a Sasakian manifold}}, Serdica Math. J. {\bf{34}} (2008), 597-606.

\bibitem{Al1} F. R. Al-Solamy, V. A. Khan and S. Uddin, {\it{Geometry of warped product semi-slant submanifolds of nearly Kaehler manifolds}}, Results Math. {\bf 71} (2017), 783-799.

\bibitem{Bi} R. L. Bishop and B. O'Neill, {\it Manifolds of negative curvature,} Trans. Amer. Math. Soc.  {\bf{145}} (1969), 1-49.

\bibitem{Bl} D. E. Blair, {\it Contact Manifolds in Riemannian Geometry,} Lecture Notes in Mathematics, Vol. {\bf{509}}. Springer-Verlag, New York, 1976.

\bibitem{Cab} J. L. Cabrerizo, A. Carriazo, L. M. Fernandez and M. Fernandez, {\it Semi-slant submanifolds of a Sasakian manifold}, Geom. Dedicata  {\bf{78}} (1999), 183-199.

\bibitem{Cab2} J. L. Cabrerizo, A. Carriazo, L. M. Fernandez and M. Fernandez, {\it Slant submanifolds in Sasakian manifolds}, Glasgow Math. J.  {\bf{42}} (2000), 125-138.

\bibitem{C1} B.-Y. Chen, {\it Slant immersions,} Bull. Austral. Math. Soc. {\bf{41}} (1990), 135-147.

\bibitem{C3} B.-Y. Chen, {\it Geometry of warped product CR-submanifolds in Kaehler manifolds}, Monatsh. Math.  {\bf{133}} (2001), 177--195.

\bibitem{C4} B.-Y. Chen, {\it Geometry of warped product CR-submanifolds in Kaehler manifolds II}, Monatsh. Math.  {\bf{134}} (2001), 103--119.

\bibitem{book} B.-Y. Chen, {\it Pseudo-Riemannian Geometry, $\delta$-invariants and Applications},  World Scientific, Hackensack, NJ, 2011.

\bibitem{C5} B.-Y. Chen, {\it Geometry of warped product submanifolds: a survey,} J. Adv. Math. Stud.  {\bf{6}} (2013), no. 2, 1--43.

\bibitem{book17} B.-Y. Chen, {\it Differential Geometry of Warped Product Manifolds and Submanifolds},  World Scientific, Hackensack, NJ, 2017.

\bibitem{C6} B.-Y. Chen and O. Garay, \textit{ Pointwise slant submanifolds in almost Hermitian manifolds,} Turk. J. Math. {\bf{36}} (2012), 630--640.

\bibitem{CU} B.-Y. Chen and S. Uddin, {\it Warped product pointwise bi-slant submanifolds of Kaehler manifolds}, Publ. Math. Debrecen {\bf 92} (2018), 183--199.

\bibitem{Etayo} F. Etayo, \textit{On quasi-slant submanifolds of an almost Hermitian manifold,} Publ. Math. Debrecen {\bf{53}} (1998), 217--223.

\bibitem{Has} I. Hasegawa and I. Mihai, {\it Contact CR-warped product submanifolds in Sasakian manifolds}, Geom. Dedicata {\bf{102}} (2003), 143-150.

\bibitem{Hi} S. Hiepko, \textit{Eine inner kennzeichungder verzerrten produkte}, Math. Ann. 241 (1979), 209-215.

\bibitem{Khan} K. A. Khan, V. A. Khan and S. Uddin, {\it Warped product submanifolds of cosymplectic manifolds},  Balkan J. Geom. Appl., {\bf{13}} (2008), 55--65.

\bibitem{Lotta} A. Lotta, {\it Slant submanifolds in contact geometry},  Bull. Math. Soc. Roumanie {\bf{39}} (1996), 183-198.

\bibitem{Mun} M. I. Munteanu, \textit{Warped product contact CR-submanifolds of Sasakian space forms}, Publ. Math. Debrecen {\bf{66}} (2005), 75-120.

\bibitem{Pa} N. Papaghiuc, \textit{Semi-slant submanifolds of Kaehlerian manifold}, Ann. St. Univ. Iasi {\bf{9}} (1994), 55-61.

\bibitem{Park} K. S. Park, \textit{Pointwise slant and pointwise semi-slant submanifolds in almost contact metric manifolds}, Mathematics {\bf{8}} (6) (2020), Art. 985, 33 pp.

\bibitem{Sahin1} B. Sahin, {\it Non existence of warped product semi-slant submanifolds of Kaehler manifolds,} Geometriae Dedicata. 117 (2006), 195--202.

\bibitem{Sahin2} B. Sahin, {\it Warped product submanifolds of Kaehler manifolds with a slant factor}, Ann. Pol. Math.  {\bf{95}} (2009), 207-226.

\bibitem{Sahin3} B. Sahin, \textit{Warped product pointwise semi-slant submanifolds of Kaehler manifolds}, Port. Math. {\bf{70}} (2013), 252-268.

\bibitem{U1} S. Uddin, V. A. Khan and H. H. Khan, {\it Some results on warped product submanifolds of a Sasakian manifold}, Int. J. Math. Math. Sci.  (2010), Article ID 743074 (doi:10.1155/2010/743074).

\bibitem{UK11} S. Uddin, K.A. Khan, {\it Warped product CR-submanifolds of cosymplectic manifolds}, Ric. Mat. {\bf{60}} (2011), 143--149.

\bibitem{U2} S. Uddin and F. R. Al-Solamy, {\it{Warped product pseudo-slant submanifolds of cosymplectic manifolds}}, An. \c{S}tiin\c{t}. Univ. Al. I. Cuza Ia\c{s}i Mat (N.S.) {\bf{63}} (2016), 901-913.

\bibitem{U3} S. Uddin and F. R. Al-Solamy, {\it{Warped product pseudo-slant immersions in Sasakian manifolds}}, Publ. Math. Debrecen {\bf{91}} (3-4) (2017), 331--348.

\bibitem{U4} S. Uddin, B.-Y. Chen and F. R. Al-Solamy, {\it{Warped product bi-slant immersions in Kaehler manifolds}}, Mediterr. J. Math. {\bf{14}} : 95 (2017), doi:10.1007/s00009-017-0896-8.

\bibitem{U5} S. Uddin , A. H. Alkhaldi, \textit{Pointwise slant submanifolds and their warped products in Sasakian manifolds}, Filomat {\bf{32}} (12) (2018), 4131--4142. 

\bibitem{Ya} K. Yano and M. Kon, {\it Structures on Manifolds}, Series in Pure Mathematics, World Scientific Publishing Co., Singapore, 1984.
\end{thebibliography}


\end{document}